  \crefname{theorem}{Theorem}{Theorems}
  \crefname{thm}{Theorem}{Theorems}
  \crefname{lemma}{Lemma}{Lemmas}
  \crefname{lem}{Lemma}{Lemmas}
  \crefname{remark}{Remark}{Remarks}
  \crefname{prop}{Proposition}{Propositions}
  \crefname{defn}{Definition}{Definitions}
  \crefname{corollary}{Corollary}{Corollaries}
  \crefname{section}{Section}{Sections}
  \crefname{figure}{Figure}{Figures}
\newtheorem{thm}{Theorem}[section]
\newtheorem{lemma}[thm]{Lemma}
\newtheorem{corollary}[thm]{Corollary}
\numberwithin{equation}{section}
\theoremstyle{definition}
\newtheorem{remark}[thm]{Remark}
\newtheorem{prop}[thm]{Proposition}
\def\cD{\mathcal{D}}
\def\cC{\mathcal{C}}
\def\w{\omega}
\def\C{\mathbb{C}}
\def\D{\mathbb{D}}
\def\H{\mathbb{H}}
\def\R{\mathbb{R}}
\def  \p- {p\textunderscore}
\def\th1{{\theta_1}}
\DeclareMathOperator{\re}{Re}
\DeclareMathOperator{\diam}{diam}
\DeclareMathOperator{\im}{Im}
\newcommand{\lbda}{\lambda}
\title{Loewner Equation driven by complex-valued functions}
\keywords{Loewner equation}
\subjclass{30C35, 60D05 }
\author{ Huy Tran}
\address{UCLA Department of Mathematics, Los Angeles, CA 90095-1555. }
\email{tvhuy@math.ucla.edu}
\date{\today}
\begin{document}
\maketitle
\begin{abstract} Consider the Loewner equation associated to the upper-half plane. Normally this equation is driven by a real-valued function. In this paper, we show that when the driving function is complex-valued with small $1/2$-H{\"o}lder norm, the corresponding hull is a quasi-arc, hence is a simple curve. We also study how the hull changes with respect to complex parameters and make a connection between Loewner equation and complex dynamics.
\end{abstract}
\tableofcontents

%
\section{Introduction}
Consider the Loewner equation version for upper half plane $\mathbb{H}$, that is, consider a continuous function $\lambda:[0,1]\to \R$ and the following ordinary differential equation
\begin{eqnarray}
\partial_t g_t(z) & = & \frac{2}{g_t(z)-\lambda(t)}\label{e:LE1}, \\
g_0(z) & = & z, \nonumber
\end{eqnarray}
for each $z\in \mathbb{H}$.

For each $z\in \mathbb{H}$, there exists a unique $T_z\in (0,1]$ such that the equation (\ref{e:LE1}) has the unique solution up to $T_z$ but not further.  For each $t\geq 0$, define the hull $K_t=\{z\in \mathbb{H}:T_z\leq t\}$. It can be proved that  $g_t$ is a conformal map from $\H\backslash K_t$ onto $\H$; see \cite{L}.

If there is a curve $\gamma:[0,1]\to \overline{\H}$ such that $\H\backslash K_t$ is the unbounded component of $\H\backslash \gamma([0,t])$ for $t\in [0,1]$, then we say that $\gamma$ is a Loewner curve generated by $\lbda$, or that $\lambda$ generates $\gamma$. In cases to emphasize the dependency on $\lbda$, we use the notation $\gamma^{\lbda}$. This curve can be computed from $g_t$:
\begin{eqnarray}
\label{e:curve}
\gamma(t) &=&\lim_{y\to 0^+} g_t^{-1}(\lambda(t)+iy)\mbox{ for } t\in [0,1].
\end{eqnarray}

The existence of the curve $\gamma$ holds when $\lambda$ has 1/2-H{\"o}lder norm less than 4 (See \cite{RS}, \cite{Lind}, and \cite{RTZ}),
$$||\lbda||_{1/2}:=\sup_{s,t\in [0,1]:s\neq t}\frac{|\lbda(t)-\lbda(s)|}{|t-s|^{1/2}}<4,$$
or when $\lbda$ is a multiplication of Brownian motion (See \cite{RS}). In the former case, the curve $\gamma$ is a quasi-arc, which means there exists a constant $C>0$ such that for any $x$ and $y$ in $ \gamma$, the diameter of the path in $\gamma$ connecting $x$ to $y$ is  less than that of $C|x-y|$. This is where one of the main motivations for the project started. 

The quasi-arcs and quasi-circles are  objects in complex dynamics associated with a notion called holomorphic motion. This was first developed by Ma\~n\'e, Sad and Sullivan \cite{MSS} to understand the Julia set of polynomials when one changes its coefficients in a holomorphic way. Another way to explain is that holomorphic motion describes the analytic movements of a subset in the complex plane. More precisely,  let $A$ be a subset of $\C$. A holomorphic motion of $A$ is a map $f:\D\times A \to \mathbb{C}$ such that
\begin{enumerate}[(i)]
\item for any fixed $a\in A$, the map $\alpha\mapsto f(\alpha,a)$ is holomorphic in $\D$.
\item for any fixed $\alpha\in \D$, the map $a\mapsto f(\alpha,a)=f_\alpha(a)$ is an injection and
\item the mapping $f_0$ is the identity on $A$.
\end{enumerate}

Just from very few assumptions, Ma\~n\'e, Sad, and Sullivan \cite{MSS}, and then  Slodkowski \cite{Slod} showed that 

\begin{thm}\label{thm:MSS} If $f:\D\times A \to \C$ is a holomorphic motion, then $f$ has an extension to $F:\D\times \C\to \C$  such that 
\begin{enumerate}[(i)]
\item $F$ is a holomorphic motion of $\C$,
\item each $F_\alpha(\cdot):\C\to \C$ is quasi-symmetric,
\item $F$ is jointly continuous in $(\alpha,a)$.
\end{enumerate}

\end{thm}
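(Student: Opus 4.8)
The plan is to prove Theorem~\ref{thm:MSS} in two stages: first the classical estimates of Ma\~n\'e--Sad--Sullivan (the $\lambda$-lemma), which already give a continuous extension to $\overline{A}$ with quasi-symmetric fibers and joint continuity there; and then the full extension across $\C$, which is the content of S\l odkowski's theorem.

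\emph{Stage 1: the $\lambda$-lemma.} The case $\#A\le 2$ is trivial, so assume $A$ contains three distinct points and fix two of them, $b$ and $c$. For any third point $a\in A$, post-compose $f_\alpha$ with the M\"obius map sending $f_\alpha(c)\mapsto 0$ and $f_\alpha(b)\mapsto 1$; since $f_\alpha$ is an injection into $\C$, the resulting function $\alpha\mapsto\widehat f_\alpha(a)$ is holomorphic on $\D$ and omits the values $0,1,\infty$. The Schwarz--Pick lemma applied to the universal cover of $\C\setminus\{0,1\}$ then forces $\widehat f_\alpha(a)$ to remain in a compact subset of $\C\setminus\{0,1\}$ as $\alpha$ ranges over $\{|\alpha|\le r\}$, the compact set depending only on $r$ and on $\widehat f_0(a)=(a-c)/(b-c)$. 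Unwinding the normalization, this says that all cross-ratios built from $f_\alpha$ are distorted by a bounded amount, uniformly for $|\alpha|\le r$: for distinct $a,b,c,d\in A$,
\[
\bigl|\log\operatorname{cr}\bigl(f_\alpha(a),f_\alpha(b),f_\alpha(c),f_\alpha(d)\bigr)-\log\operatorname{cr}(a,b,c,d)\bigr|\le C(r),
\]
where $\operatorname{cr}$ is the cross-ratio. Equivalently, each $f_\alpha$ is quasi-symmetric on $A$ with a constant depending only on $|\alpha|$, so the $f_\alpha$ are uniformly H\"older on bounded parts of $A$; hence each extends continuously and injectively to $\overline A$, and the extended family is again a holomorphic motion, holomorphy in $\alpha$ surviving the locally uniform limit by Weierstrass. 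Joint continuity of $f$ on $\D\times\overline A$ follows since $\{f_\alpha\}$ is a normal family that varies holomorphically in $\alpha$.

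\emph{Stage 2: extension to $\C$ and conclusion.} If $\overline A=\C$ we are done. In general one invokes the \emph{one-point extension lemma}: if $\phi:\D\times B\to\C$ is a holomorphic motion with $\phi_0=\mathrm{id}$ and $p\in\C\setminus B$, then there is a holomorphic motion $\widetilde\phi:\D\times(B\cup\{p\})\to\C$ extending $\phi$ with $\widetilde\phi_0=\mathrm{id}$. Granting this, partially order the holomorphic motions extending $f$ by inclusion of domains; a totally ordered chain has the obvious union as an upper bound, holomorphy and fibrewise injectivity being checked on a single member of the chain, so Zorn's lemma yields a maximal extension $F$, whose domain must be all of $\C$ by the one-point lemma. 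Re-running the Stage~1 estimates for the holomorphic motion $F$ of $\C$ then shows that each $F_\alpha:\C\to\C$ is quasi-symmetric, hence a quasiconformal homeomorphism (a global quasi-symmetric self-map of $\C$ is quasiconformal, and conversely), with dilatation controlled by the hyperbolic distance from $0$ to $\alpha$, and that $F$ is jointly continuous; this is (i)--(iii).

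\emph{The main obstacle} is the one-point extension lemma: everything else is a soft consequence of Montel's theorem and Schwarz--Pick. Geometrically the lemma asks for a holomorphic section $\alpha\mapsto(\alpha,\widetilde\phi_\alpha(p))$ of the trivial bundle $\D\times\C\to\D$ through $(0,p)$ whose graph avoids the graphs $\{(\alpha,\phi_\alpha(b)):\alpha\in\D\}$, $b\in B$, and this cannot be produced by elementary means. Following Douady's Bourbaki exposition of S\l odkowski's argument, I would first treat the case of finite $B$: deform the configuration of graphs using the Ahlfors--Bers measurable Riemann mapping theorem, parametrize the deformation, and run an open--closed (degree) argument to show that a section through $(0,p)$ persists for the whole deformation; then pass to arbitrary $B$ by the same normal-families compactness used in Stage~1. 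The delicate point throughout is controlling the Beltrami coefficients so that the resulting family of quasiconformal maps depends holomorphically on $\alpha$.
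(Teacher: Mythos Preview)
The paper does not prove this theorem; it is quoted as a background result and attributed to \cite{MSS} and \cite{Slod}. So there is no ``paper's own proof'' to compare against. Your sketch is essentially the standard route through the $\lambda$-lemma followed by S\l odkowski's one-point extension, as in Douady's Bourbaki exposition, and the outline is accurate as far as it goes.

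That said, be aware that what you have written is a sketch, not a proof: the one-point extension lemma is the entire content of S\l odkowski's theorem, and you have only indicated the shape of the argument (finite $B$, Ahlfors--Bers, open--closed continuation, then a limit). If you intend this to stand as a proof you would need to carry out those steps, which is substantial; if you intend it as a citation with context, it would be cleaner to simply state the theorem and cite \cite{MSS,Slod} (and perhaps Douady's exposition) as the paper does.
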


In particular, when $A$ is a line segment (or a circle), then $f_\alpha(A)$ is a quasi-arc (or quasi-circle respectively). This makes the holomorphic motion become an important tool in quasi-conformal mapping theory and in complex dynamics. As an example, in \cite{MSS}, it was shown that the Julia sets of polynomials $z^2+c$ are all quasi-circles when $c$ is in the main cardioid of the Mandelbrot set. One can also show that the Koch snowflake is a quasi-arc by putting it into a holomorphic motion. Conversely, if a curve is  a quasi-circle, by using the Beltrami equation, one can see that there is a holomorphic motion in which the curve is an image of the unit circle.

Thus from the complex dynamic point of view, one can ask: Is there a way to put the quasi-arc curve $\gamma$ generated from the Loewner equation into a holomorphic motion of a line segment such that when the complex parameter $\alpha$ changes in $\D$, the motion  is tied up with the Loewner equation in a certain way? 

Here we note that the authors in \cite{RS} used techniques from the quasi-conformal mapping theory but not holomorphic motion. The question is related to studying (\ref{e:LE1}) with complex-valued $\lambda$. In this context, one has to define $K_t$ differently since the upper half-plane plays no special role. In particular, suppose $\lambda$ is complex-valued and continuous. It is still true from standard ODE theory that for each $z\in \C$, there exists a unique $T_z\in [0,1]$ such that the equation (\ref{e:LE1}) has the unique solution up to $T_z$ but not further.  For each $t\geq 0$, 
define
\begin{eqnarray}
L_t & = & \{z\in \C: T_z\leq t\}.
\end{eqnarray}
\noindent
One can ask: Is $L_t$ a curve when $||\lambda||_{1/2}$ is small? 

Note that if $\lambda$ is real-valued, then 
\begin{eqnarray}\label{e:LK_t}
L_t&=&\overline{K_t}\cup \overline{K_t^*}
\end{eqnarray}
 where $K_t^*$ is the reflection of $K_t$ about the real line.  This question is actually not new. It was studied by O. Schramm and S. Rohde \cite{RS13}. 

The main purpose of this paper is to answer the two questions above. Explicitly, we show that

\begin{thm}\label{t:L_t curve}
Suppose that $\lbda$ is a complex-valued function with  1/2-H{\"o}lder norm less than $\sigma$. If $\sigma$ is small, then there exists a quasi-arc $\gamma:[-1,1]\to \C$ such that $L_t = \gamma[-t,t]$ for every $t\in [0,1]$.
\end{thm}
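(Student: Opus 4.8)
The plan is to deduce the complex-valued statement from the real-valued one --- covered by \cite{RS}, \cite{Lind}, \cite{RTZ} --- by placing $\lambda$ inside a holomorphic family of drivers and invoking Slodkowski's form of the $\lambda$-lemma, \Cref{thm:MSS}. Write $\lambda=\mu+i\nu$ with $\mu=\re\lambda$, $\nu=\im\lambda$ real-valued, so $\|\mu\|_{1/2},\|\nu\|_{1/2}\le\sigma$. For $\alpha\in\C$ set $\lambda_\alpha(t):=\mu(t)+\alpha\,\nu(t)$; then $\lambda_0=\mu$ is real-valued, $\lambda_i=\lambda$, and $\|\lambda_\alpha\|_{1/2}\le(1+|\alpha|)\sigma$. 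Fix $\rho=2$. If $\sigma$ is small enough that $3\sigma$ lies below every threshold appearing below, then all the estimates hold uniformly for $|\alpha|\le\rho$ --- in particular at $\alpha=i$. Write $g_t^\alpha$ for the forward flow driven by $\lambda_\alpha$, $L_t^\alpha$ for its hull, and $\eta_t^\alpha$ for the slit with $g_t^\alpha(\C\setminus L_t^\alpha)=\C\setminus\eta_t^\alpha$. For $\alpha=0$ the driver $\mu$ is real with small $1/2$-H\"older norm, so $K_t^\mu$ is generated by a quasi-arc and, by \eqref{e:LK_t}, $L_t^0=\overline{K_t^\mu}\cup\overline{(K_t^\mu)^*}$; for $\sigma$ small this is a Jordan arc satisfying Ahlfors' three-point condition --- its two halves meet only at, and leave transversally from, the base $\mu(0)$ --- hence a quasi-arc.

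The engine for general $\alpha$ is the reverse Loewner flow. Fix $T\in(0,1]$; with $\tilde\lambda_\alpha(s):=\lambda_\alpha(T-s)$, let $\tilde g_s^\alpha$ solve $\partial_s\tilde g_s^\alpha=-2/\bigl(\tilde g_s^\alpha-\tilde\lambda_\alpha(s)\bigr)$, $\tilde g_0^\alpha=\mathrm{id}$. Differentiating $s\mapsto g_{T-s}^\alpha\bigl((g_T^\alpha)^{-1}(z)\bigr)$ shows it solves this same ODE with initial value $z$, so $\tilde g_T^\alpha=(g_T^\alpha)^{-1}$; in particular the singular set of the reverse flow is exactly $\eta_T^\alpha$, and $\tilde g_T^\alpha$ maps $\C\setminus\eta_T^\alpha$ conformally onto $\C\setminus L_T^\alpha$. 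For $u\in[-1,1]$ define
\[
\gamma_\alpha(u):=\lim_{\ve\to0^+}(g_{|u|}^\alpha)^{-1}\bigl(\lambda_\alpha(|u|)+i\operatorname{sgn}(u)\,\ve\bigr),
\]
so that $\gamma_\alpha(\pm|u|)$ are the two ``tips'' of the hull at time $|u|$; for $\alpha=0$ this recovers the curve of the previous paragraph (using Schwarz reflection for $u<0$).

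The technical heart is a package of quantitative estimates for the reverse flow \emph{in $\C$}, uniform in $\alpha$ and $T$, valid once $\sigma$ is small: (a) $\eta_t^\alpha$ is a Jordan arc --- a small perturbation of a segment --- with $\lambda_\alpha(t)$ in its interior, depending continuously on $\alpha$; (b) $\tilde g_t^\alpha$ extends continuously to $\overline{\C\setminus\eta_t^\alpha}$ and the one-sided limits defining $\gamma_\alpha(\pm u)$ converge, locally uniformly in $(\alpha,u)$; (c) hence $\gamma_\alpha\colon[-1,1]\to\C$ is a Jordan arc, continuous across $u=0$ with value $\lambda_\alpha(0)$, and $L_t^\alpha=\gamma_\alpha([-t,t])$ for every $t\in[0,1]$. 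As in the real case one argues by controlling, along the flow, the logarithmic derivatives of $\partial_z\tilde g_s^\alpha(z)$ and of $\tilde g_s^\alpha(z)-\tilde g_s^\alpha(z')$; smallness of $\|\lambda_\alpha\|_{1/2}$ keeps these bounded and delivers both the continuous boundary extension and the absence of self-intersections. \emph{I expect this to be the main obstacle.} In $\H$ the quantity $\im\tilde g_s$ is monotone, which makes the reverse flow automatically regular and supplies the required a priori bounds for free; in $\C$ there is no such half-plane, and regularity of the reverse flow near the singular slit $\eta_t^\alpha$ must be wrung out of the H\"older smallness of the driver alone. One must also verify that the $\ve\to0$ limits converge uniformly enough in $\alpha$ to be interchanged with holomorphy in $\alpha$ in the final step.

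Granting this, the proof finishes quickly. For each fixed $u$ and each fixed $\ve>0$, the map $\alpha\mapsto(g_{|u|}^\alpha)^{-1}\bigl(\lambda_\alpha(|u|)+i\operatorname{sgn}(u)\,\ve\bigr)$ is holomorphic on $\{|\alpha|<\rho\}$ (holomorphic dependence of ODE solutions, and of conformal inverses, on a holomorphic parameter), so by (b) and Vitali's theorem $\alpha\mapsto\gamma_\alpha(u)$ is holomorphic there. Since each $\gamma_\alpha$ is injective and $\gamma_0$ is injective, $H_t(\alpha,\gamma_0(u)):=\gamma_\alpha(u)$ is a well-defined holomorphic motion of $\gamma_0([-t,t])=L_t^0$ over $\{|\alpha|<\rho\}$ (rescale $\alpha\mapsto\alpha/\rho$ onto $\D$). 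By \Cref{thm:MSS} it extends to a holomorphic motion of $\C$ whose fiber maps are quasi-symmetric self-homeomorphisms of $\C$; such maps carry quasi-arcs to quasi-arcs, so $L_t^\alpha=\gamma_\alpha([-t,t])$ is a quasi-arc for all $|\alpha|\le\rho$, with a uniform constant. Taking $\alpha=i$ and $\gamma:=\gamma_i$ gives $L_t=\gamma([-t,t])$ for all $t\in[0,1]$ with $\gamma$ a quasi-arc, which is the assertion. (One may instead avoid \Cref{thm:MSS} and extract the three-point condition for $\gamma_\alpha$ directly from the distortion bounds in (c); I single out the holomorphic-motion route because it sits closest to the rest of the paper.)
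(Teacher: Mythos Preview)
Your overall architecture --- reverse-flow limits to define $\gamma_\alpha$, holomorphic dependence on $\alpha$, then Slodkowski's $\lambda$-lemma --- is essentially the paper's own route. The one cosmetic difference is the choice of holomorphic family: the paper uses $\alpha\mapsto\alpha\lambda$, so the base $\alpha=0$ is the zero driver and $\gamma^{(0)}(t)=2i\sqrt t$ is already a line segment, whereas you take $\lambda_\alpha=\mu+\alpha\nu$ and have to import the real-valued theory for the base. Either works.

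The genuine gap is that your ``package (a)--(c)'' is exactly the technical content the paper has to supply, and you do not indicate how. The paper's key idea is \emph{not} to first analyse the slit $\eta_t^\alpha$ and then extend $(g_t^\alpha)^{-1}$ to its boundary, but to work directly with the backward flow $f(u,t,z)$ solving $\partial_u(f+\lambda(t-u))=-2/f$, $f(0,t,z)=z$, and to prove it exists for all $u\in[0,t]$ whenever $z$ lies in a narrow cone $\cC_{\theta_1}=\{|\re w|<\theta_1\im w\}$. The device is a Gronwall-type comparison (\cref{l:Gronwall}): let $B(u)=i\sqrt{\hat y_0^2+4u}$ be the explicit solution for the zero driver; then $A-B$ solves a linear ODE whose multiplier has negative real part and whose forcing is $O(\sigma\sqrt u)$, giving $|A(u)-B(u)|\le|A(0)-B(0)|+C\sigma\sqrt u$. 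This pins $A(u)$ inside a slightly wider cone $\cC_{\theta_2}$ with $\im A(u)\ge\nu\sqrt u$ --- precisely the replacement for the $\H$-monotonicity you correctly flag as missing. From this cone invariance everything follows: the identity $\partial_z f=\exp\int_0^u 2/f^2$ together with $f\in\cC_1$ gives $|\partial_z f|\le1$, hence the $y\to0^+$ limit exists uniformly; injectivity of $t\mapsto\gamma(t)$ comes from concatenation plus conformality of $z\mapsto f(t,t,z)$ on $\cC_{\theta_1}$; and the identification $L_t=\gamma([-t,t])$ uses a second cone/duality argument (for $\sigma$ small one can take $\theta_1>1$, so $L_{s,\lambda(t+\cdot)}\setminus\{\lambda(t)\}\subset\lambda(t)+\cD_{\theta_1}$ lies in the domain of $g_t^{-1}$), making $\gamma(\pm t)$ cut-points of $L_{t+s}$.

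One further caution on your final step: asserting that $\alpha\mapsto(g_{|u|}^\alpha)^{-1}\bigl(\lambda_\alpha(|u|)+i\operatorname{sgn}(u)\varepsilon\bigr)$ is holomorphic presupposes that $\lambda_\alpha(|u|)+i\operatorname{sgn}(u)\varepsilon\notin R_{|u|}^\alpha$ for \emph{every} $\alpha$ in the disk, which is not automatic and in fact already needs the cone estimate above. The paper sidesteps this by defining the motion through the backward map $f(t,t,iy,\alpha)$, whose existence is secured by \cref{lm:existence}, and then passing to the limit $y\to0^+$ using the uniform bound $|\partial_z f|\le1$.
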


Also we show that

\begin{thm}\label{t:motion} Let $\lbda$ be a complex-valued driving function with  1/2-H{\"o}lder norm less than $\sigma$. If $\sigma$ is small, then there exists a map $\gamma:\D\times [0,1]\to \C$ such that $\gamma^{(\alpha)}(t):=\gamma(\alpha,t)$ satisfies:
\begin{enumerate}[(i)]
\item for any fixed $t\in [0,1]$, the map $\alpha\mapsto \gamma^{(\alpha)}(t)$ is holomorphic in $\D$,
\item for any fixed $\alpha\in \D$, the map $t\mapsto \gamma^{(\alpha)}(t)$ is injective,
\item  $\gamma^{(0)}(t)=2i\sqrt{t}$.
\item when $\alpha\in \D\cap \mathbb{R}$, $\gamma^{(\alpha)}$ is generated by $\alpha \lambda$ from the Loewner equation,
\end{enumerate}
\end{thm}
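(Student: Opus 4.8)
The plan is to build the holomorphic motion directly from the Loewner flow with a complex scaling parameter inserted into the driving function. For $\alpha\in\D$ define $g_t^{(\alpha)}$ to be the solution of $\partial_t g_t^{(\alpha)}(z)=2/(g_t^{(\alpha)}(z)-\alpha\lambda(t))$ with $g_0^{(\alpha)}(z)=z$, and set $\gamma^{(\alpha)}(t):=\lim_{y\to 0^+}(g_t^{(\alpha)})^{-1}(\alpha\lambda(t)+2i\sqrt{y})$ — or, more robustly, define $\gamma^{(\alpha)}(t)$ as the point whose preimage-hull is $L_t$ via Theorem \ref{t:L_t curve} applied to the driving function $\alpha\lambda$ (note $\|\alpha\lambda\|_{1/2}\le\|\lambda\|_{1/2}<\sigma$ for all $\alpha\in\D$, so the quasi-arc conclusion is available uniformly in $\alpha$). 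Property (iv) is then immediate: when $\alpha\in\D\cap\R$ the equation is the genuine real Loewner equation driven by $\alpha\lambda$, so $\gamma^{(\alpha)}$ is its generated curve. Property (iii) follows by taking $\alpha=0$: the equation becomes $\partial_t g_t(z)=2/g_t(z)$, whose hull is the vertical slit and whose tip is $2i\sqrt t$.

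The substantive points are (i) and (ii). For holomorphy in $\alpha$, I would first show that the flow $(\alpha,t,z)\mapsto g_t^{(\alpha)}(z)$ is holomorphic in $\alpha$ for $z$ away from the evolving hull: this is a standard consequence of holomorphic dependence of ODE solutions on parameters, since the right-hand side $2/(w-\alpha\lambda(t))$ is holomorphic jointly in $(w,\alpha)$ wherever it is defined. The delicate part is passing this holomorphy to the boundary limit defining $\gamma^{(\alpha)}(t)$. Here I would use the uniform quasi-arc bounds: Theorem \ref{t:L_t curve} (with the constants uniform over $\|\mu\|_{1/2}<\sigma$) gives uniform modulus-of-continuity control on $(g_t^{(\alpha)})^{-1}$ up to the boundary, so the limit in \eqref{e:curve} converges locally uniformly in $\alpha$; a normal-families / Hurwitz argument then upgrades locally uniform convergence of holomorphic functions to holomorphy of the limit. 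Concretely, for fixed $t$, the maps $\alpha\mapsto (g_t^{(\alpha)})^{-1}(\alpha\lambda(t)+2i y)$ are holomorphic for each $y>0$, uniformly bounded and uniformly Cauchy as $y\to0$ by the quasi-arc estimate, hence converge to a holomorphic function of $\alpha$.

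For injectivity in $t$ (property (ii)) with $\alpha$ fixed, the point is again Theorem \ref{t:L_t curve}: for each fixed $\alpha\in\D$ the driving function $\alpha\lambda$ has $1/2$-Hölder norm below $\sigma$, so $L_t(\alpha\lambda)=\gamma^{(\alpha)}[-t,t]$ is a quasi-arc parametrized injectively; restricting to the parameter segment $[0,1]$ (the "$t\ge0$ half" of the arc, with the normalization $\gamma^{(\alpha)}(0)$ the base point) gives an injective path. One must check that the parametrization coming from the Loewner capacity/time matches the quasi-arc parametrization monotonically, i.e. that $L_s\subsetneq L_t$ for $s<t$; this follows because $T_z$ is strictly increasing along the flow, so the hulls are strictly nested and $\gamma^{(\alpha)}$ cannot revisit a point.

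The main obstacle I anticipate is the boundary regularity needed in step (i): making rigorous that the tip map $\alpha\mapsto\gamma^{(\alpha)}(t)$ inherits holomorphy from the interior flow. This requires the quasi-arc constants in Theorem \ref{t:L_t curve} to be genuinely uniform over the family $\{\alpha\lambda:\alpha\in\D\}$ — equivalently, a Hölder/quasiconformal estimate on $(g_t^{(\alpha)})^{-1}$ near the boundary that is uniform in $\alpha$ — so that the $y\to0^+$ limit is locally uniform in $\alpha$ rather than merely pointwise. Everything else (holomorphic dependence of ODE solutions on parameters, the $\alpha=0$ and $\alpha$-real special cases, nested-hull injectivity) is routine given Theorem \ref{t:L_t curve}.
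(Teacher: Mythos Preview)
Your proposal is circular. You invoke Theorem~\ref{t:L_t curve} (and in particular its uniform quasi-arc bounds) both to justify the uniform-in-$\alpha$ convergence of the tip limit and to obtain injectivity in $t$. But in the paper's logical order, the quasi-arc conclusion of Theorem~\ref{t:L_t curve} rests on Corollary~\ref{c:quasi}, which in turn is deduced \emph{from} Theorem~\ref{t:motion} via the $\lambda$-lemma (Theorem~\ref{thm:MSS}). Even the continuity of $\gamma$ used in the proof of Theorem~\ref{t:L_t curve} is listed as a consequence of Corollary~\ref{c:quasi}. So the quasi-arc estimates you want to import are not available at this stage; you would be assuming exactly the regularity that the holomorphic motion is supposed to produce.

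The paper avoids this by working with the \emph{backward} flow $f(u,t,z)$ solving \eqref{e:ODEf2} and never touching Theorem~\ref{t:L_t curve}. The key is Lemma~\ref{lm:existence}: a Gronwall comparison with the explicit solution $B(u)=i\sqrt{\hat y_0^2+4u}$ shows that for $z\in i\R^+$ the flow stays in the cone $\cC_{\tau(0)}\subset\cC_1$. This cone containment alone gives the elementary derivative bound $|\partial_z f(u,t,iy)|\le 1$ (Section~\ref{s:gamma}), which makes $f(t,t,iy)\to f(t,t,0^+)$ uniform in $t$ and, after replacing $\lambda$ by $\alpha\lambda$, also uniform in $\alpha\in\D$. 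Holomorphy in $\alpha$ then passes to the limit by the standard uniform-limit argument, with no quasi-arc input. Injectivity in $t$ is handled directly by the concatenation identity \eqref{eqn:con1} and conformality of $z\mapsto f(t_1,t_1,z)$ on $\cC_{\theta_1}$ (Section~\ref{s:injectivity}), again independently of Theorem~\ref{t:L_t curve}. The step you flagged as the ``main obstacle'' --- uniform boundary regularity of $(g_t^{(\alpha)})^{-1}$ --- is exactly what the cone/derivative estimate replaces; that is the idea your proposal is missing.
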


In particular, if we define $F:\D\times [0,2i]\to \C$ such that 
$$F(\alpha,a)=\gamma^{(\alpha)}(\frac{-a^2}{4}),$$
then $F$ is a holomorphic motion of $[0,2i]$. Hence Theorem \ref{thm:MSS} immediately implies that 

\begin{corollary} \label{c:quasi}
For all $\alpha\in \D$, the curve $\gamma^{(\alpha)}$ is a quasi-arc. In particular, $\gamma$ is a quasi-arc.
\end{corollary}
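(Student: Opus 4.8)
The plan is to check that $F$ fulfils the three defining properties of a holomorphic motion of the segment $A=[0,2i]$ and then quote Theorem~\ref{thm:MSS} verbatim; the genuine analytic content has already been supplied by Theorems~\ref{t:L_t curve} and~\ref{t:motion}. First I would record the reparametrization: writing a point of $[0,2i]$ as $a=iy$ with $y\in[0,2]$ gives $-a^2/4=y^2/4$, and $y\mapsto y^2/4$ is a homeomorphism of $[0,2]$ onto $[0,1]$, so $a\mapsto -a^2/4$ is a homeomorphism of $[0,2i]$ onto $[0,1]$ with inverse $t\mapsto 2i\sqrt{t}$. Then: (i) for fixed $a$, the map $\alpha\mapsto F(\alpha,a)=\gamma^{(\alpha)}(-a^2/4)$ is holomorphic on $\D$ by Theorem~\ref{t:motion}(i); (ii) for fixed $\alpha$, the map $a\mapsto F(\alpha,a)$ is the composition of the injection $a\mapsto -a^2/4$ with the injection $t\mapsto\gamma^{(\alpha)}(t)$ of Theorem~\ref{t:motion}(ii), hence injective; (iii) $F(0,iy)=\gamma^{(0)}(y^2/4)=2i\sqrt{y^2/4}=iy$ by Theorem~\ref{t:motion}(iii), so $F_0=\mathrm{id}$ on $[0,2i]$. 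Thus $F$ is a holomorphic motion of $[0,2i]$.

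Next I would invoke Theorem~\ref{thm:MSS} to extend $F$ to a holomorphic motion $\widehat F:\D\times\C\to\C$ of $\C$ whose slices $\widehat F_\alpha$ are quasisymmetric self-homeomorphisms of $\C$. As recorded in the discussion following Theorem~\ref{thm:MSS}, a quasisymmetric self-homeomorphism of $\C$ maps the line segment $[0,2i]$ onto a quasi-arc, so $\widehat F_\alpha([0,2i])$ is a quasi-arc for every $\alpha\in\D$. Since $\widehat F_\alpha$ restricts to $F_\alpha$ on $[0,2i]$, and since $\gamma^{(\alpha)}(t)=F\bigl(\alpha,2i\sqrt{t}\bigr)=\widehat F_\alpha\bigl(2i\sqrt{t}\bigr)$ for $t\in[0,1]$, the curve $\gamma^{(\alpha)}$ is simply a continuous (indeed quasisymmetric) parametrization of this quasi-arc. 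Hence $\gamma^{(\alpha)}$ is a quasi-arc for each $\alpha\in\D$.

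For the last sentence of the corollary — that the curve $\gamma$ attached to $\lambda$ is itself a quasi-arc — I would remove the constraint $|\alpha|<1$ at $\alpha=1$ by rescaling the driving function: choose $r>1$ small enough that $r\,||\lambda||_{1/2}$ still lies below the smallness threshold $\sigma$, apply Theorem~\ref{t:motion} to $r\lambda$ to obtain a family $\widetilde{\gamma}^{(\alpha)}$, and note by Theorem~\ref{t:motion}(iv) that $\widetilde{\gamma}^{(1/r)}$ is the Loewner curve driven by $(1/r)(r\lambda)=\lambda$. Since $1/r\in\D$, the previous paragraph shows this curve is a quasi-arc; the same rescaling handles the hull curve of Theorem~\ref{t:L_t curve}.

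I do not expect a genuine obstacle in the corollary itself: once $F$ is recognized as a holomorphic motion of $[0,2i]$, everything is a direct appeal to Theorem~\ref{thm:MSS} and to the classical fact (recorded just after it) that quasisymmetric images of segments are quasi-arcs. The one point that is not completely free is that the quasisymmetry (equivalently quasiconformality) constant furnished by Theorem~\ref{thm:MSS} degenerates as $|\alpha|\to 1$, so one cannot simply set $\alpha=1$ to reach the curve of $\lambda$ — that is exactly what the rescaling in the third paragraph is for. The remaining work is pure bookkeeping: checking that the substitution $t=-a^2/4$ is a homeomorphism onto $[0,1]$ and that it transports properties (i)--(iii) correctly.
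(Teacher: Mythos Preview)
Your proposal is correct and follows exactly the route the paper sketches: verify that $F(\alpha,a)=\gamma^{(\alpha)}(-a^2/4)$ is a holomorphic motion of $[0,2i]$ and then invoke Theorem~\ref{thm:MSS}. The only cosmetic difference is in reaching the curve driven by $\lambda$ itself: the paper simply remarks that the proof of Theorem~\ref{t:motion} already allows $\alpha$ to range over a set strictly containing $\overline{\D}$, whereas you obtain the same conclusion by rescaling $\lambda$ to $r\lambda$ and taking $\alpha=1/r\in\D$ --- both devices are valid and equivalent.
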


\begin{remark} The continuity in $t$ of $\gamma^{(\alpha)}(t)$ is a part of the corollary.
\end{remark}
\begin{remark} For concreteness, the constant $\sigma$ in Theorems  \ref{t:L_t curve} and \ref{t:motion} can be set $\sigma=1/3$ which is non-optimal from the proof. We do not know if one can take $\sigma=4$ which is the optimal $1/2$-H{\"o}lder norm in Marshall and Rohde's theorem (\cite{Lind}).
\end{remark}
\begin{remark}
There is nothing special about the unit disk $\D$ in Theorem \ref{t:motion}. In fact, the proof shows that we can take $\alpha$ to be in a bigger set which contains $\D$.
\end{remark}

\begin{remark} A simple example for the theorem is that when $\lbda(t)=c$ for all $t\in [0,1]$,
$$\gamma^{(\alpha)}(t)=2i\sqrt{t}+\alpha c\mbox{ for } t\in [0,1], \alpha\in \D.$$
See (\ref{e:ODEf2}) and (\ref{eqn:gamma}).

\end{remark}


Let us explain the main ideas in the paper. We will prove that the limit as in (\ref{e:curve}) is well-defined for each $t\in [0,1]$. To do this we study the backward Loewner equation. For each $t\in [0,1]$, consider 
\begin{eqnarray}
\partial_u h_{u,t}(z) &=& \frac{-2}{h_{u,t}(z)-\lbda(t-u)}, \label{e:ODEhu}\\
h_{0,t}(z) & = & z \in \C. \nonumber
\end{eqnarray}
There is a sign difference between (\ref{e:ODEhu}) and (\ref{e:LE1}). 

In the classical setting, i.e. $\lambda$ is real-valued, the equation has a unique solution $h_{u,t}(z), u\in [0,t]$ for any $z\in \H$. Furthermore, one can show that $h_{u,t}(z)$ is conformal with respect to $z\in \H$ and
$$h_{t,t}=g_t^{-1}.$$
In particular,  $\gamma(t)=\lim_{y\to 0^+} h_{t,t}(\lbda(t)+iy)$. We can renormalize $h$ by defining $f_{u,t}(z)=h_{u,t}(z+\lbda(t))-\lbda(t-u)$. Then
\begin{eqnarray}
\partial_u (f_{u,t}(z)+\lbda(t-u)) &=&\frac{-2}{f_{u,t}(z)}\label{e:LE3},\\
f_{0,t}(z)&=&z. \nonumber
\end{eqnarray}

When $\lambda$ is complex-valued, it is not clear that (\ref{e:LE3}) has solution for any $z=\lambda(t)+iy$. We will show that it is still true provided $||\lambda||_{1/2}$ is small.

There are two key ideas. The first is that the equation (\ref{e:LE3}) is understood pretty well quantitatively in the  real-valued case by the paper \cite{RTZ}. The other idea is that  we can compare the complex-valued case to the real-valued one by a certain Gronwall-type lemma; see Lemma \ref{l:Gronwall}.

The organization of the paper is as follows. In Section \ref{s:proof}, we show the existence of the curve $\gamma$ in the sense of (\ref{e:curve}). Then we prove Theorem \ref{t:motion}. In Section \ref{s:hull}, we investigate the hull $L_t$ and prove Theorem \ref{t:L_t curve}.

{\bf Acknowledgments.} H.T. is partially supported by NSF grant DMS-1162471. The author is indebted to S. Rohde for 	numerous discussions and the suggestion to study the complex Loewner evolution.  He also thanks M. Bonk for his encouragement while the project was being done.
\section{A lemma} \label{sec:pre}

The following comparison lemma, which will be used in Lemma \ref{lm:existence}, is inspired by the proof of Theorem 3.4 in \cite{Wong}. This is also one of the main tools in \cite{LT}. 

\begin{lemma} \label{l:Gronwall}
Let $Z:[0,u_0]\to \C$ be a solution to
$$Z'(u)=P(u)Z(u) - P(u)Q(u),$$
with $|P|\leq -C\re P$ and $|Q(v)-Q(0)|\leq \w(v) $ on $[0,u_0]$, where $\w$ is a non-decreasing function.   Then
$$|Z(u) - Q(u)|\leq |Z(0)-Q(0)|+(C+1) \w(u) \mbox{ for all } u\in [0,u_0] .$$
\end{lemma}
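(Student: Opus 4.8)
The plan is to solve the linear ODE for $Z$ explicitly by an integrating factor, peel off the constant $Q(0)$, and then estimate the one remaining integral by turning the sign hypothesis $|P|\le -C\re P$ into the statement that a certain exponential weight has monotone modulus. Concretely, set $A(u)=\int_0^u P(s)\,ds$, so that $\big(e^{-A(u)}Z(u)\big)'=-e^{-A(u)}P(u)Q(u)$. Integrating on $[0,u]$ and using the identity $\int_0^u\big(-P(s)\big)e^{-A(s)}\,ds=e^{-A(u)}-1$ to absorb the contribution of $Q(0)$, one arrives at
\begin{equation}\label{e:var-of-params}
Z(u)-Q(0)=e^{A(u)}\big(Z(0)-Q(0)\big)-\int_0^u e^{A(u)-A(s)}P(s)\big(Q(s)-Q(0)\big)\,ds.
\end{equation}
Note that no differentiability of $Q$ is needed here: in \eqref{e:var-of-params} the function $Q$ appears only undifferentiated, so continuity of $P$ and $Q$ (and $Z\in C^1$) suffices.

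Next I would extract the two sign facts hidden in the hypothesis. Since $|P|\ge 0$, the inequality $|P|\le -C\re P$ forces $\re P\le 0$ on $[0,u_0]$ (and $C>0$). Hence for $0\le s\le u\le u_0$ we have $\re\big(A(u)-A(s)\big)=\int_s^u\re P\le 0$, so $\big|e^{A(u)-A(s)}\big|\le 1$; in particular $\big|e^{A(u)}\big|\le 1$, which bounds the boundary term in \eqref{e:var-of-params} by $|Z(0)-Q(0)|$.

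The crux is the integral term. Put $\psi(s):=\big|e^{A(u)-A(s)}\big|=e^{\re A(u)-\re A(s)}$; then $\psi'(s)=-\re P(s)\,\psi(s)\ge 0$. Using $|P(s)|\le -C\re P(s)$ together with $|Q(s)-Q(0)|\le\w(s)\le\w(u)$ (here the monotonicity of $\w$ enters), the integrand of \eqref{e:var-of-params} is bounded in modulus by $C\,\w(u)\,\psi(s)\big(-\re P(s)\big)=C\,\w(u)\,\psi'(s)$, whence
$$\left|\int_0^u e^{A(u)-A(s)}P(s)\big(Q(s)-Q(0)\big)\,ds\right|\le C\,\w(u)\big(\psi(u)-\psi(0)\big)\le C\,\w(u),$$
because $\psi(u)=1$ and $\psi(0)=\big|e^{A(u)}\big|\ge 0$. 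Combining this with the boundary estimate gives $|Z(u)-Q(0)|\le|Z(0)-Q(0)|+C\,\w(u)$, and then the triangle inequality $|Z(u)-Q(u)|\le|Z(u)-Q(0)|+|Q(u)-Q(0)|$ together with $|Q(u)-Q(0)|\le\w(u)$ yields the claimed bound $|Z(u)-Q(u)|\le|Z(0)-Q(0)|+(C+1)\w(u)$.

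The one genuine subtlety — and the step I expect to be the main obstacle if approached naively — is precisely this integral estimate: bounding $\big|e^{A(u)-A(s)}\big|$ crudely by $1$ inside the integral leaves $\int_0^u|P|$, which is not small and not otherwise controlled, so that route stalls. The resolution is to retain the weight $\psi(s)$ and to recognize $\psi(s)\big(-\re P(s)\big)$ as the exact derivative $\psi'(s)$, so that the integral telescopes to something of size $O(\w(u))$ regardless of how large $\int_0^u|P|$ is. Everything else — the variation-of-parameters identity, the extraction of $\re P\le 0$, and the final triangle inequality — is routine.
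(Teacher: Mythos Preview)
Your proof is correct and is essentially the same as the paper's: both use the integrating factor $e^{-A(u)}$ (the paper writes $\mu=-A$), peel off $Q(0)$ so that only $Q(s)-Q(0)$ remains under the integral, and then observe that $|P|\le -C\re P$ turns the exponential weight times $-\re P$ into an exact derivative, so the integral telescopes to at most $C\omega(u)$; the extra $\omega(u)$ comes from the triangle inequality passing from $Q(0)$ to $Q(u)$. Your write-up is simply more explicit about the telescoping step than the paper's two-line computation.
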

\begin{proof}
Let $\mu(u)=\int^u_0 -P(v) \, dv$. We have
\begin{align*}
Z(u) &=e^{-\mu(u)}Z(0) + e^{-\mu(u)}\int^u_0 e^{\mu(v)}(-P Q ) \, dv \\
 &=e^{-\mu(u)}(Z(0)-Q(0))+Q(0)+e^{-\mu(u)}\int^u_0 e^{\mu(v)}(-P)[Q-Q(0)] \, dv.
\end{align*}
Therefore,
$$|Z(u)-Q(u)|\leq e^{-\re\mu(u)}|Z(0)-Q(0)| + |Q(0)-Q(u)| + e^{-\re\mu(u)}\int^u_0 e^{\re \mu(v)}C (-\re P)\w(u) \, dv $$
$$\leq |Z(0)-Q(0)|+ (C+1)\w(u).$$

\end{proof}


\section{Backward Loewner equation driven by complex-valued functions} \label{s:proof}
Fix small $\sigma$. Let $\Lambda_\sigma$ be the set of complex-valued functions defined on $[0,1]$ such that its 1/2-H{\"o}lder norm is less than $\sigma$
$$\{\lbda:[0,1]\to \C: ||\lbda||_{1/2}<\sigma\}.$$
 For each $\theta > 0$, define $\cC_\theta=\{z\in \H: |\re z|< \theta \im z\}$ be an upside-down cone based at $z=0$ with the ``angle'' $\theta$. The set $\cC_\theta$ is open in $\C$ and does not contain zero. For $\theta=0$, define $\cC_0=i\R^+$. Fix $\lbda\in \Lambda_\sigma$. In this section, we do the following steps.
\begin{enumerate}[(i)]
\item Show that when $\theta_1$ is positive and small enough, then for   $z\in \cC_\th1$, $0\leq t\leq 1$,  the equation
\begin{eqnarray}
\partial_u (f(u,t, z)+ \lbda(s-u)) &= &\frac{-2}{f(u,t,  z)}, \label{e:ODEf2}\\
f(0,t, z) &= &z \nonumber
\end{eqnarray}
has the unique solution $f(u,t,z)$, $u\in [0,t]$.

\item Show that  $$\lim_{y\to  0^+ } f(u,t, iy)$$
exists. And the limit is denoted by $f(u,t,0^+)$. 
Then define 
\begin{equation}\label{eqn:gamma}
\gamma^{(\lbda)}(t)=f(t,t,0^+)+ \lbda(0)
\end{equation} for each $t\in [0,1]$. 
\item  Show that  the map $t\mapsto \gamma^{(\lbda)}(t)$ is injective.

\item Show that when $\lbda$ is a real-valued function, then  the curve $(\gamma^{(\lbda)}(t))_{t\in [0,1]}$ is the same as $\tilde\gamma$ which is the curve generated by $\lbda$ in the Loewner equation (\ref{e:LE1}).
\end{enumerate}

These steps are respectively proved  in Sections \ref{sec:existence}-\ref{s:lbda_real}. Then we prove Theorem \ref{t:motion} in Section \ref{s:motion}. We remark that the continuity of $\gamma^{(\lbda)}$  follows from Corollary \ref{c:quasi}.


\subsection{Existence of the solution to  the initial ODE} \label{sec:existence}
In this section, fix $t\in [0,1]$, and fix $z\in \cC_{\th1}$ where $\cC_{\th1}$ is the upside down cone based at 0
 $$\cC_{\th1}=\{w\in \H: |\re w|<\th1 \im w\}.$$ 

We consider the following ODE
\begin{eqnarray}
\frac{d}{du} (A + \lbda(t-u)) &=& \frac{-2}{A} \label{eqn:ODEA},\\
A(0)&=&z.\nonumber
\end{eqnarray}

%
%
%

\begin{lemma}\label{lm:existence}
Suppose that $\th1$ is small enough depending only on $\sigma$. Then this equation has a unique solution $A(u), 0\leq u\leq t$, for given $t, z$. Moreover, there exist $\tau(\th1)$ and $\nu(\theta_1)$ depending only on $\theta_1$ (and $\sigma$) such that
\begin{eqnarray*}
A(u) &\in &\cC_{\tau(\th1)} \label{eqn:coneA},\\
\im A(u) &\geq &\nu(\th1)\sqrt{u}. \nonumber
\end{eqnarray*}
\end{lemma}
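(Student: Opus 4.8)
The plan is to establish existence, uniqueness, and the two a priori bounds in one continuation argument, comparing $A$ with the explicit unperturbed flow $\tilde A(u)=\sqrt{z^2-4u}$ (the branch with $\tilde A(0)=z$) by means of the comparison Lemma~\ref{l:Gronwall}.

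First, the routine inputs. Writing \eqref{eqn:ODEA} in integral form, $A(u)=z+\lbda(t)-\lbda(t-u)-2\int_0^u A(v)^{-1}\,dv$, and using that $w\mapsto -2/w$ is holomorphic, hence locally Lipschitz, on $\C\setminus\{0\}$ while $\lbda$ is continuous, standard ODE theory gives a unique maximal solution $A:[0,u^*)\to\C\setminus\{0\}$ which extends past any time up to which $A$ remains in a compact subset of $\C\setminus\{0\}$. For the comparison flow, $\tilde A(u)=\sqrt{z^2-4u}$ solves $\partial_u\tilde A=-2/\tilde A$ with $\tilde A(0)=z$; since $z\in\cC_{\th1}$ (with $\th1<1$) forces $\re(z^2-4u)<0$ for every $u\ge0$, one checks directly that $\tilde A(u)\in\cC_{\th1}$ for all $u$, with $\tfrac{2}{\sqrt{1+\th1^2}}\sqrt u\le\im\tilde A(u)\le|z|+2\sqrt u$.

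Now the comparison. Set $Z(u)=\bigl(A(u)-\tilde A(u)\bigr)+\lbda(t-u)$; from $\tfrac{-2}{A}-\tfrac{-2}{\tilde A}=\tfrac{2(A-\tilde A)}{A\tilde A}$ one obtains
$$Z'(u)=P(u)\bigl(Z(u)-Q(u)\bigr),\qquad P(u)=\frac{2}{A(u)\tilde A(u)},\qquad Q(u)=\lbda(t-u),$$
with $Z(0)-Q(0)=(z-z)+\lbda(t)-\lbda(t)=0$. Fix a small constant $\tau_{\max}$, chosen larger than the output $\tau(\th1)$ below but small enough that $\arctan\tau_{\max}+\arctan\th1<\pi/3$, and let $u_*=\sup\{u\le\min(u^*,t):A(v)\in\cC_{\tau_{\max}}\text{ for all }v\in[0,u]\}$; continuity of $A$ and $z\in\cC_{\th1}\subsetneq\cC_{\tau_{\max}}$ give $u_*>0$. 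On $[0,u_*)$ both $A(v)$ and $\tilde A(v)$ lie in narrow cones about $i\R^+$, so $A(v)\tilde A(v)$ lies within angle $\pi/3$ of the negative real axis, whence $|P(v)|\le -C\,\re P(v)$ with $C\le 2$. Lemma~\ref{l:Gronwall}, applied on $[0,u]$ for each $u<u_*$ with $\w(v)=||\lbda||_{1/2}\sqrt v\le\sigma\sqrt v$, then yields
$$|A(u)-\tilde A(u)|=|Z(u)-Q(u)|\le(C+1)\sigma\sqrt u\le 3\sigma\sqrt u\qquad(0\le u<u_*).$$

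It remains to close the bootstrap. From the displayed bound and the properties of $\tilde A$, one gets $\im A(u)\ge\im\tilde A(u)-3\sigma\sqrt u\ge\bigl(\tfrac{2}{\sqrt{1+\th1^2}}-3\sigma\bigr)\sqrt u=:\nu(\th1)\sqrt u$ and, using $\im\tilde A\le\im A+3\sigma\sqrt u$, also $|\re A(u)|\le\th1\,\im\tilde A(u)+3\sigma\sqrt u\le\bigl(\th1+\tfrac{3\sigma(1+\th1)}{\nu(\th1)}\bigr)\im A(u)=:\tau(\th1)\,\im A(u)$; for $\sigma$ small and then $\th1$ small one has $\nu(\th1)>0$ and $\tau(\th1)<\tau_{\max}$ strictly, while $|A(u)|\le|z|+(2+3\sigma)\sqrt u$ stays bounded. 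Hence on $[0,u_*)$ the solution lies in $\cC_{\tau(\th1)}$, bounded and bounded away from $0$, so it extends continuously to $u_*$, where by continuity $|\re A(u_*)|\le\tau(\th1)\im A(u_*)$ with $\im A(u_*)>0$, i.e. $A(u_*)\in\cC_{\tau_{\max}}$. Were $u_*<\min(u^*,t)$, this would contradict the definition of $u_*$ by continuity; were $u^*\le t$, the compactness would contradict the maximality of $u^*$. Therefore $u_*=t<u^*$: the solution exists and is unique on $[0,t]$ and satisfies the two claimed inequalities with $\tau=\tau(\th1)$ and $\nu=\nu(\th1)$. I expect the bootstrap to be the only real obstacle, since the cone estimate on $A$ is simultaneously a hypothesis (needed for $|P|\le -C\re P$, hence for Lemma~\ref{l:Gronwall}) and a conclusion; it closes only because $\tau(\th1)$ can be made strictly smaller than $\tau_{\max}$, which is precisely what forces ``$\sigma$ small'' and ``$\th1$ small depending on $\sigma$'', and one must keep the dependence of the successive constants ($\th1$, then $C$, then $\nu(\th1),\tau(\th1)$) explicit to verify that strict inequality. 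One could equally compare with the backward flow driven by $\re\lbda$ and quote the quantitative estimates of \cite{RTZ} for that real-valued flow instead of using the explicit $\sqrt{z^2-4u}$; the rest of the argument is unchanged.
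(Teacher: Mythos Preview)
Your argument is correct and follows the same overall architecture as the paper: a continuation (bootstrap) argument in which one postulates that $A$ stays in a cone $\cC_{\tau_{\max}}$, uses this to get $|P|\le -C\,\re P$, invokes Lemma~\ref{l:Gronwall}, and then closes the loop by showing $A$ actually lands in the strictly smaller cone $\cC_{\tau(\th1)}$.

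The one genuine difference is the choice of reference flow. The paper compares $A$ with the purely imaginary trajectory $B(u)=i\sqrt{\hat y_0^2+4u}$, started from an optimally chosen $i\hat y_0$ (the foot of the perpendicular from $z$ to $i\R^+$); this keeps the reference as simple as possible but produces a nonzero initial discrepancy $|A(0)-B(0)|$ that has to be carried through and minimised, leading to the explicit constraint~\eqref{eqn:constrain}. You instead compare with $\tilde A(u)=\sqrt{z^2-4u}$, the unperturbed flow started at $z$ itself; this makes the initial error vanish, so Lemma~\ref{l:Gronwall} gives the cleaner bound $|A-\tilde A|\le (C+1)\sigma\sqrt u$, at the price of having to check separately that $\tilde A(u)\in\cC_{\th1}$ (which you do via the observation $\re(z^2-4u)<0$). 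Both choices yield the same qualitative conclusion with comparable constants; your variant is arguably a bit more streamlined since it avoids the optimisation over $\hat y_0$, while the paper's version makes the dependence of the admissible $\th1$ on $\sigma$ slightly more explicit through \eqref{eqn:constrain}, which is used in Remark~\ref{rm:quant}.
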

\begin{proof}
This ODE has solution on small time $u$. By the ODE theory, the solution continue to exist uniquely as long as $A(u)  \neq 0.$ We will show a stronger statement: the solution flow always stays in a fixed cone whose angle is depending on $\theta_1$ and $\sigma$. 

The idea of the lemma is as follows. We compare the solution $A$ to $B(u)=i\sqrt{\hat y_0^2+4u}$ which is the solution of the ODE
\begin{eqnarray*}
\frac{dB}{du}  &=& \frac{-2}{B}, \\
B(0)&=&i\hat y_0, \nonumber
\end{eqnarray*}
for a well-chosen $\hat y_0>0$ depending only on $z$. Since $B(u)$ is flowing up as $u$ increases, $A(u)$ is dragged along and  stays in  a bigger but fixed cone $\cC_{\theta_2}$. 

Now by a topological argument, it suffices to show that for any $t_1\in [0,t)$ if
\begin{equation}\label{eqn:coneA}
A(u)\in \cC_{\theta_2}\mbox{ for all } u \in [0,t_1),
\end{equation}
with $\theta_2$ chosen later, then $A(t_1)\in \cC_{\theta_2}$. Suppose that (\ref{eqn:coneA}) holds.

Let $P(u) = \frac{2}{A(u)B(u)}$. Then
$$|P(u)|\leq -C_2\re P(u) \mbox{ for all } u\in [0,t_1),$$
where 
$$C_2=\sqrt{1+\theta_2^2}.$$ 
 Since
$$\partial_u (A(u)-B(u) +\lbda(s-u))=P(u) (A(u)-B(u)),$$
by Lemma \ref{l:Gronwall}, we derive
\begin{equation}\label{eqn:A-B}
|A(u)-B(u)|\leq |A(0)-B(0)| + (C_2+1) \sigma \sqrt{u}.
\end{equation}
Hence for all $u\in [0,t_1)$,
\begin{eqnarray*}
\frac{|\re A(u)|}{\im A(u)} & \leq & \frac{ |A(0)-B(0)|+(C_2+1)\sigma\sqrt{u} }{\sqrt{\hat{y}_0^2+4u}- |A(0)-B(0)|-(C_2+1)\sigma\sqrt{u}}.
\end{eqnarray*}
We note that the right-hand side is strictly less than $\theta_2$ for all $u\geq 0$. Indeed, it is equivalent to
\begin{equation*}
(1+\theta_2)|A(0)-B(0)| + (1+\theta_2)(\sqrt{1+\theta_2^2}+1)\sigma\sqrt{u}< \theta_2\sqrt{4u+\hat y_0^2}.
\end{equation*}

Since $a+b\sqrt{u}< c\sqrt{u+d}$ for all $u\geq 0$ if $a< \sqrt{c^2-b^2}\sqrt{d}$ and $a,b,c,d\geq 0$, the above inequality holds if

$$\frac{|A(0)-B(0)|}{\hat y_0} \frac{1+\theta_2}{\theta_2}< \sqrt{1-\frac{\sigma^2(1+\theta_2)^2}{4\theta_2^2} (\sqrt{1+\theta_2^2}+1)^2},$$
and the expression under the root sign on the right-hand side is positive.

By a geometric argument,
$$\min_{\hat y_0>0} \frac{|A(0)-i\hat y_0|}{\hat y_0}=\sin (\tan^{-1}\frac{|x|}{y})< \frac{\th1}{\sqrt{1+\theta_1^2}},$$
where $A(0)=z=x+iy\in \cC_\th1$ and the minimum happens when $\hat y_0=\frac{|A(0)|}{\cos (\tan^{-1}(|x|/y))}$. This leads to the constrain 
\begin{equation}\label{eqn:constrain}
 \frac{\th1}{\sqrt{1+\theta_1^2}}\frac{1+\theta_2}{\theta_2}< \sqrt{1-\frac{\sigma^2(1+\theta_2)^2}{4\theta_2^2} (\sqrt{1+\theta_2^2}+1)^2}.
 \end{equation}
For a given small $\sigma$, one can choose small $\theta_1$ depending on $\sigma$, and then choose $\tau(\th1):=\theta_2$ depending on $\theta_1$ and $\sigma$ such that the above constrain is true.

It follows from the proof that
\begin{eqnarray*}
\im A(u) & \geq & \sqrt{\hat{y}_0^2+4u}- |A(0)-B(0)|-(C_2+1)\sigma\sqrt{u} \\
&> &\frac{1}{\theta_2} (|A(0)-B(0)|+(C_2+1)\sigma\sqrt{u})\\
&\geq & \frac{1}{\theta_2}(\sqrt{1+\theta_2^2}+1)\sigma\sqrt{u}=:\nu(\th1)\sqrt{u}~~~\mbox{ for all } u\in [0,t_1).
\end{eqnarray*}
Hence $A(t_1)$ is well-defined and in $\cC_{\theta_2}$. This concludes the lemma.
\end{proof}

\begin{remark} \label{rm:quant}
 \begin{enumerate}[(i)]
\item The smaller $\sigma$ is, the bigger possible range of $\theta_1$ is. More quantitatively, for given small $\sigma$, we can choose $\theta_1(\sigma)$  such that (\ref{eqn:constrain})  holds for some $\theta_2>0$ and such that  $\lim_{\sigma\to 0^+} \theta_1(\sigma)\to \infty.$ This fact is needed later in Section \ref{s:hull}.

\item When $A(0)\in \cC_0$, then (\ref{eqn:constrain}) is always true for $\theta_2=1$ and $\sigma$ small, say $\sigma\leq 1/3$. Hence we assume $\tau(0)\leq 1$. Also, when $\sigma\to 0^+$, we can choose $\tau(0)$ such that $\tau(0)\to 0$. Thus, we assume $\tau(0)\leq \theta_1(\sigma)$.

\end{enumerate}
\end{remark}
\begin{remark} \label{r:cone1}
The lemma shows the existence of the solution $f(u,t, z) :=A(u), 0\leq u\leq t,$ when $z\in \cC_{\th1}$.  We note that $f$ also depends on the driving function $\lbda$. But we omit this notation since it is clear from the context. 
Some properties of $f(u,t,z)$:\begin{enumerate}[(i)]

\item $f(u,t, z)\in \cC_{\tau(\th1)}$. If $z\in i\R^+$, then $f(u,t,z)\in \cC_{\tau(0)}\subset \cC_1.$

\item $\im f(u,t, z)> \nu(\th1)\sqrt{u}$.

\item By the dependency of solutions of ODE on parameters (\cite[Chapter 1, Theorem 8.4]{CL}), the map $z\mapsto f(u,t,z)$   is analytic in $\cC_\th1$. Also the map $u\mapsto \partial_z f(u,t, z)$, $u\in [0,t]$, satisfies the following ODE:
\begin{eqnarray}
\frac{d}{du} X(u) &= & \frac{2X(u)}{f(u,t, z)^2} \label{eqn:prtlz},\\
X(0) &= &1. \nonumber
\end{eqnarray}
\item We claim that $z\mapsto f(u,t, z)$ is conformal in $\cC_\th1$.  Indeed it suffices to show that this map is injective. Suppose for some $u,t, z_1, z_2$ that
$$f(u,t, z_1)=f(u,t, z_2).$$
Let $Y_j(v)=f(u-v,t, z_j)$ for $v\in [0,u]$, $j=1,2$. Then $Y_j$'s satisfy the same ODE with the same initial value:
\begin{eqnarray*}
\partial_v (Y_j(v) + \lbda(t-u+v))&=&\frac{2}{Y_j(v)}, ~~~ v\in [0,u],\\
Y_j(0)&=&f(0,t, z_j).
\end{eqnarray*}
It implies that $Y_1(u)=Y_2(u)$. In particular, $z_1=z_2$. Hence $f(u,t, z)$ is conformal with respect to $z\in \cC_\th1$.

\item The map $f(u,t, z)$ satisfies a concatenation property; see the identity (\ref{eqn:con1}) in Section \ref{s:injectivity}.
\end{enumerate}
\end{remark}
%
%

\subsection{Existence of $f(u,t,0^+)$ and $\gamma^{(\lbda)}(t)$} \label{s:gamma}
In this subsection, we show that $f(u,t,0)=\lim_{ y\to 0^+} f(u,t , iy)$ exists. It follows from the equation (\ref{eqn:prtlz}) that 
$$\partial_z f(u,t ,z) = \exp \int^u_0 \frac{2}{f(v,t ,z)^2} \,dv.$$
Thus,
\begin{eqnarray*}
|\partial_z f(u,t,iy)| &= &\exp \int^u_0 \re\frac{2}{f(v,t,z)^2} dv \\
&= & \exp\int^u_0 \frac{2(x(v)^2-y(v)^2)}{(x(v)^2+y(v)^2)^2}\,dv~~ \mbox{ where } x(v)+iy(v)=f(v,t,z)\\
&\leq & 1 \mbox{ because } x(v)+iy(v)\in \cC_{\tau(0)}\subset \cC_1 \mbox{ and because of Remark \ref{rm:quant}(ii).}
\end{eqnarray*}
This shows that $f(u,t,iy)$ converges  to a limit, denoted by $f(u,t,0^+)$, uniformly in $u,t$ as $y\to 0^+$. Also
\begin{enumerate}[(i)]
\item $\im f(u,t,0^+) \geq  \nu(\th1)\sqrt{u}$.
\item $f(u,t,0^+)\in \cC_{\tau(0)}\subset \cC_1,$ when $u>0$.
\item $f(0,t,0^+)=0$.
\item Fix $u,t$ and fix $\theta<\th1$. Since $f(u,t,z)$ is conformal in $\cC_\th1$, by a property of conformal mappings, $f(u,t,z)$ converges uniformly to $f(u,t,0^+)$ as $z\to 0$ and $z\in \cC_\theta$.
\end{enumerate} 
Denote
$$\gamma^{(\lbda)}(t):=f(t,t,0^+) +\lbda(0)\mbox{ for each } t\in [0,1].$$

%
%
%
\subsection{Injectivity of $t\mapsto \gamma^{(\lbda)}(t)$}\label{s:injectivity}

Suppose there exist $0\leq t_1< t_2\leq 1$ such that $\gamma^{(\lbda)}(t_1)=\gamma^{(\lbda)}(s_2)$. That implies
$$f(t_1,t_1,0^+)= f(t_2, t_2,0^+).$$
The reader who is familiar with Loewner equation may recognize the idea from this part. If one can show that for every driving function, the generated curve never hits the real line, then by the concatenation property, the curve never hits itself.

Fix $\theta_1=\theta_1(\sigma)$ which is mentioned in Remark \ref{rm:quant}. Fix $z\in \cC_\th1$. The functions $u\mapsto f(u,t_1, f(t_2 - t_1,t_2, z))$ and $u\mapsto f(u+t_2-t_1, t_2,z)$ satisfy the same initial value ODE:
\begin{eqnarray*}
\frac{d}{du}[X(u)+\lbda(t_1-u) ]&=& \frac{-2}{X(u)} \mbox{ for } u\in [0,t_1],\\
X(0)&=&f(t_2-t_1, t_2,z).
\end{eqnarray*}
Technically, we have shown the above equation has solution if $f(t_2-t_1,t_2,z)\in \cC_\th1$. But it does not affect the main goal of this section. 
Hence 
\begin{equation}\label{eqn:con1}
f(u,t_1,f(t_2 - t_1, t_2,z)) = f(u + t_2-t_1,t_2,z)\mbox{ for } u\in [0,t_1].
\end{equation}
Let $\tilde\lbda(u)=\lbda(u+t_1)$. Let $\tilde f(u,t ,z)_{0\leq u\leq t\leq t_2 - t_1}$ be the solution of (\ref{eqn:ODEA}) in Lemma \ref{lm:existence} when $\lbda$ is replaced by $\tilde\lbda$. We note that $u\mapsto \tilde f(u, t_2-t_1,z)$ also satisfies
\begin{eqnarray*}
\frac{d}{du}[X(u)+ \lbda(t_2-u) ]&=& \frac{-2}{X(u)},\\
X(0)&=&z.
\end{eqnarray*}
Hence 
\begin{equation*}\label{eqn:con2}
\tilde f(u, t_2 - t_1,z) = f(u, t_2,z) \mbox{ for } u\in [0,t_2 - t_1].
\end{equation*}
Letting $u=t_2-t_1$ in the previous identity and  $u=t_1$ in (\ref{eqn:con1}), we find that 

$$f(t_2,t_2,z)=f(t_1,t_1, \tilde f(t_2-t_1,t_2-t_1,z)) \mbox{ for all } z\in \cC_\theta.$$
Let $z=iy\to i0^+$, using the assumption, 
$$f(t_1,t_1,0^+)=f(t_2,t_2,0^+)=f(t_1,t_1,w_0).$$
where $w_0 = \tilde{f}(t_2-t_1,t_2-t_1,0^+) \in \cC_{\tau(0)}\subset \cC_\th1$ since $t_2\neq t_1$. This leads to a contradiction since the conformal map $\cC_\th1\ni z\mapsto f(t_1,t_1,z)$ cannot send the interior point $w_0$ and the boundary point 0 to the same value.  Thus $t\mapsto \gamma^{(\lbda)}(t)$ is injective.

%
%
%
\subsection{When $\lbda$ is a real-valued function} \label{s:lbda_real}

Suppose that $\lbda$ is a real-valued function. We learn from the previous section that $f(\cdot, t,  z)$ is the unique solution of the ODE
\begin{eqnarray}
\partial_u (f(u,t,  z)+\lbda(t-u)) &= &\frac{-2}{f(u,t,  z)},~~~0\leq u\leq t \leq 1 \label{eqn:ODEf},\\
f(0, t, z) &= &z.\nonumber
\end{eqnarray}
We recall that this ODE has solution for all $z\in \H$ (\cite{L}). Let $\tilde\gamma$ be the Loewner curve generated by the real-valued driving function $\lbda$. For each $t\in [0,1]$, the backward Loewner equation
\begin{eqnarray}
\partial_u h_{u,t}(z) &=& \frac{-2}{h_{u,t}(z)-  \lbda(t-u)},\label{eqn:ODEh}\\
h_{0,t}(z) & = &z\in \H, \nonumber
\end{eqnarray}
has the  solution $h_{u,t}(z),u\in [0,t]$ for each $z\in \H$  and $\tilde\gamma(t)=\lim_{y\to 0^+} h_{t,t}(\lbda(t)+iy)$.\\
By the uniqueness of the solutions given initial values in (\ref{eqn:ODEf}) and (\ref{eqn:ODEh}), 
$$h_{u,t}(z)=f(u, t,z-\lbda(t))+ \lbda(t-u).$$
By setting $u=t$ and $z=iy+\lbda(t)$ and letting $y\to 0^+$, 
$$\tilde\gamma(t)=\gamma^{(\lbda)}(t).$$

%
%
%
%
%
\subsection{Proof of Theorem \ref{t:motion}}\label{s:motion}

We learn from the previous section that for every $\lbda\in \Lambda_\sigma$, and $z\in \cC_{\theta_1}$, the solution $f(\cdot, t,z)$ to (\ref{e:ODEf2}) exists uniquely.

Fix $\lambda\in \Lambda_\sigma$ and $\alpha\in \D$. Let $f(u,t,z,\alpha)$, $u\in [0,t]$, be the unique solution to the equation (\ref{e:ODEf2}) with $\lambda$ replaced by $\alpha\lambda\in \Lambda_\sigma$. By the dependency of ODE, for fixed $u,t,z$, the map $\alpha\mapsto f(u,t,z,\alpha)$ is analytic in $\alpha\in \D$.

By an abuse of notation, denote
$$\gamma^{(\alpha)}(t)=\lim_{y\to 0^+} f(t,t,iy, \alpha) + \alpha \lbda(0).$$
The existence of the limit is proved in Section \ref{s:gamma}. It is uniformly in $t$ and $\alpha$. By Section \ref{s:injectivity}--Section \ref{s:lbda_real}, the map $(\alpha,t)\in \D\times [0,1]\mapsto \gamma^{(\alpha)}(t)$ satisfies the conclusion of Theorem \ref{t:motion}.
\qed

\begin{remark}
As explained in the Introduction, Theorem \ref{thm:MSS} and Theorem \ref{t:motion} imply that $\gamma^{(\lambda)}$ is a quasi-arc for any $\lambda\in \Lambda_\sigma$. In particular, $\gamma^{(\lambda)}$ is a curve when $\lambda$ has small norm. 
One can also show the continuity by the standard methods, for example, as in \cite[Section 3]{JVL11}.
\end{remark}

%
%
%
%
%
\section{Loewner equation driven by complex-valued  functions} \label{s:hull}

Consider a continuous function $\lambda:[0,1]\to \C$. For each $z\in \C$, consider the initial value ODE
\begin{eqnarray*}
\partial_t g_t(z) &=& \frac{2}{g_t(z)-\lbda(t)},\\
g_0(z) & = & z.
\end{eqnarray*}
For each $z\in \C$, define the life-time of $z$ as
 $T_z=\sup\{t\in (0,1]: \mbox{ solution exists for all } s\in [0,t)\}$. Let $L_t=\{z\in \C: T_z\leq t\}$. As in the classical Loewner equation, by ODE theory (see \cite[Chapter4]{L}) one can show that
\begin{enumerate}[(i)]
\item For each $t$, the set $L_t$ contains $L_0=\{\lbda(0)\}$, is bounded. Also $\C\backslash L_t$ is open.
 
\item The map $g_t$ is analytic in $z\in \C\backslash L_t$ and $g_t(z)$ is jointly continuous in $(t,z)\in [0,T]\times C_T.$
 
\item If $T_z<1$ then $\lim_{t\to T_z^-} g_t(z)=\lbda(T_z).$

\item $\inf_{t\in [0,T]} |g_t(z)-\lbda(t)|>\delta_z>0$ for any $z\in \C\backslash L_T$.

\item The map $g_t$ is conformal at infinity and 
  $$g_t(z)=z+\frac{2t}{z}+O(\frac{1}{|z|^2}) \mbox{ when } z\to \infty.$$

\item  The map $g_t : \C\backslash L_t\to g_t(\C\backslash L_t)$ is conformal. Define
  $$R_t:=\C\backslash g_t(\C\backslash L_t)$$

  \noindent The left hull $L_t$ and right hull $R_t$ satisfy the following properties.

\item (Translation)
For any $a\in \C$,
\begin{eqnarray*}
L_{t, \lbda + a} & = & a + L_{t, \lbda},\\
R_{t, \lbda + a} & = & a + R_{t, \lbda}.
\end{eqnarray*}

\item (Scaling) For any $a>0$,
\begin{eqnarray*}
L_{t,a\lbda(\cdot/a^2)} & = & a L_{t/a^2,\lbda},\\
R_{t,a\lbda(\cdot/a^2)} & = & a R_{t/a^2,\lbda}.
\end{eqnarray*}

\item (Symmetry)
\begin{eqnarray*}
L_{t,-\lbda} & = & -L_{t,\lbda},\\
R_{t,-\lbda} & = & -R_{t,\lbda}.
\end{eqnarray*}

\item (Concatenation)
\begin{eqnarray*}
g_t(L_{t+s,\lbda}\backslash L_{t,\lbda})&\subset& L_{s,\lbda(t+\cdot)}\backslash R_{t,\lbda},\\
L_{t+s,\lbda} &=& L_{t,\lbda}\cup g_t^{-1} (L_{s,\lbda(t+\cdot)}\backslash R_{t,\lbda}),\\
L_{t+s,\lbda} &=& L_{t,\lbda}\cup g_t^{-1} (L_{s,\lbda(t+\cdot)}\backslash R_{t,\lbda}),
\end{eqnarray*}
where $\lbda(t+\cdot)$ is the map $r\mapsto \lbda(t+r)$.

\item (Duality)
\begin{eqnarray*}
L_{t,\lbda}&=&iR_{t,-i\lbda(t-\cdot)},\\
R_{t,\lbda}&=&iL_{t,-i\lbda(t-\cdot)}.
\end{eqnarray*}
\end{enumerate}

The duality property was a new property which was first observed by Rohde and Schramm \cite{RS13}.  It says roughly that what is true for the left hull is also true for the right hull and vice versa. We will make use of this fact later.

\subsection{Proof of Theorem \ref{t:L_t curve}}
Fix $\lbda\in \Lambda_\sigma$ with small $\sigma$. For each $\theta\geq 0$, denote
$$\cD_\theta= \cC_\theta\cup (-\cC_\theta),$$
which is a two-side cone based at 0. Fix $\theta_1=\th1(\sigma)>1$ which is defined in  Remark \ref{rm:quant}. Let $\theta_2$ be the number such that $\cD_{\theta_2}$ is the ``complement cone'' of $i\cD_\th1$, i.e.
$$\overline{\cD_{\theta_2}}=\overline{\C\backslash i\cD_\th1}.$$
To get the right picture, the reader may pretend $\theta_1$ is big and $\theta_2$ is small. Since   $-i\lbda(t)\in L_{0,-i\lbda(t-\cdot)}$, the duality property implies
$$\lbda(t)\in R_t.$$
The Lemma \ref{lm:existence} and the duality property implies
\begin{eqnarray*}
\lbda(t)+\cD_\th1&\subset &\C\backslash R_t \mbox{ and}\\
\lbda(0)+i\cD_\th1&\subset &\C\backslash L_t.
\end{eqnarray*}
Hence
\begin{eqnarray*}
L_t &\subset &\lbda(0)+\overline{\cD_{\theta_2}},\\
R_t & \subset & \lbda(t)+i\overline{\cD_{\theta_2}},\\
L_{s,\lbda(t+\cdot)} &\subset &  \lbda(t)+ \overline{\cD_{\theta_2}}\subset \lbda(t)+\overline{\cD_{\th1}} \mbox{ for } s\geq 0,
\end{eqnarray*}
where in the last claim, we use the fact $\theta_1>1$. Therefore, $L_{s,\lbda(t+\cdot)} \backslash  \{\lbda(t)\}$ lies in the conformal domain of $g^{-1}_t$:
\begin{equation}\label{e:Ls}
g^{-1}_t (L_{s,\lbda(t+\cdot)} \backslash \{\lbda(t)\}) \cap L_t = \emptyset.
\end{equation}

Now we observe that all arguments in Section \ref{s:proof} are still true (appropriately modified) if we study the equation (\ref{e:ODEf2}) with $z\in -C_\theta$. In particular, one can extend $\gamma$ to  $[-1,0]$ by letting
$$\gamma(t)=\lim_{y\to 0+} f(t,t,-iy) + \lambda(0) \mbox{ for } t\in [-1,0].$$
It follows from Section \ref{s:gamma} that $\gamma( t)\in \cC_1+\lbda(0)$ and $\gamma(-t)\in (-\cC_1)+\lbda(0)$ for each $t\geq 0$. 
Also  since $g_t$ is conformal from $\C\backslash L_t$ to $\C\backslash R_t$, $\lambda(t)\in R_t$, and
$$\gamma(t)=\lim_{y\to 0^+} g^{-1}_t(\lambda(t)+iy),$$
we obtain
$$\gamma(t)\in L_t\backslash \{\lbda(0)\}.$$
By the concatenation property, 
$$g_t(\gamma(t+s)) = \gamma^{(\lbda(t+\cdot))}(s)\in L_{s,\lbda(t+\cdot)}\backslash \{\lbda(t)\}.$$
Combine with (\ref{e:Ls}) and use the similarity,
$$\gamma(t+s),\gamma(-t-s)\notin L_t \mbox{ for } s>0.$$
It follows that $\gamma(t)$ and $\gamma(-t)$ are cut-points of $L_{t+s}$ for $s>0$. Since $\gamma$ is continuous, by a topology argument,
\begin{equation}\label{e:L_t union}
L_t=\gamma([-t,t]).
\end{equation}
Since $\gamma[0,t]$ and $\gamma[-t,0]$ are two quasi-arcs staying in two different cones $\cC_1+\lbda(0)$ and $-\cC_1+\lbda(0)$ respectively (except at the base $\gamma(0)=\lbda(0)$), the set $L_t$ is also a quasi-arc. \qed

%
%
%




\begin{remark} For each $\lbda:[0,1]\to \C$ with small H{\"o}lder norm, let $\gamma^{(\lbda)}:[-1,1]\to \C$ be the curve generated by $\lbda$ as in Theorem \ref{t:L_t curve}.

In the spirit of the papers \cite{Wong} and \cite{LT}, one may ask if their results hold for complex-valued functions. It turns out that they are  true without much modification. In particular, one can show the following results.
\end{remark}
\begin{prop}[Lemma 3.2 in \cite{Wong}]
Let $\sigma$ be small. And let $E_\sigma=\{\gamma^{(\lbda)}(1):\lbda\in \Lambda_\sigma, \lbda(0)=0\}$. Then there exists a constant $c>0$ such that
$$\diam (E_\sigma)\leq c\sigma.$$
\end{prop}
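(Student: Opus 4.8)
The statement is a diameter bound: if $\lambda$ ranges over $\Lambda_\sigma$ with $\lambda(0)=0$, then the endpoints $\gamma^{(\lambda)}(1)$ all lie within a set of diameter $\leq c\sigma$. Since by construction $\gamma^{(\lambda)}(1) = f(1,1,0^+) + \lambda(0) = f(1,1,0^+)$ under the normalization $\lambda(0)=0$, the task reduces to controlling $f(1,1,0^+)$. The natural comparison point is the ``trivial'' driving function $\lambda \equiv 0$, for which the equation $\partial_u(f+0) = -2/f$, $f(0,1,0^+)=0$ gives $f(u,1,0^+) = 2i\sqrt{u}$, hence endpoint $2i$. So I would aim to show $|\gamma^{(\lambda)}(1) - 2i| \leq c'\sigma$ for every admissible $\lambda$, which immediately gives $\diam(E_\sigma) \leq 2c'\sigma$ by the triangle inequality.

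**Key steps.** First I would fix $z = iy$ with small $y>0$ and compare the solution $A(u) := f(u,1,iy)$ of \eqref{eqn:ODEA} (with $t=1$) against $B(u) := i\sqrt{y^2 + 4u}$, exactly as in the proof of Lemma \ref{lm:existence}. The crucial output there is the bound \eqref{eqn:A-B}:
$$|A(u) - B(u)| \leq |A(0) - B(0)| + (C_2 + 1)\sigma\sqrt{u},$$
where $C_2 = \sqrt{1+\tau(\theta_1)^2}$ is a constant depending only on $\sigma$ (and bounded, say, by $\sqrt{2}$ for $\sigma$ small). Here $A(0) = B(0) = iy$, so $|A(0)-B(0)| = 0$, and we get $|A(u) - B(u)| \leq (C_2+1)\sigma\sqrt{u}$ for all $u \in [0,1]$. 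Setting $u=1$ gives $|f(1,1,iy) - i\sqrt{y^2+4}| \leq (C_2+1)\sigma$. Second, I would let $y \to 0^+$: the left-hand side converges to $|f(1,1,0^+) - 2i|$ by the uniform convergence established in Section \ref{s:gamma}, and $i\sqrt{y^2+4} \to 2i$, so $|f(1,1,0^+) - 2i| \leq (C_2+1)\sigma$. Third, translating back, $\gamma^{(\lambda)}(1) = f(1,1,0^+) + \lambda(0) = f(1,1,0^+)$ when $\lambda(0)=0$, so every element of $E_\sigma$ lies in the disk of radius $(C_2+1)\sigma$ about $2i$; hence $\diam(E_\sigma) \leq 2(C_2+1)\sigma =: c\sigma$.

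**Main obstacle.** There is essentially no serious obstacle here — the entire technical weight was already discharged in Lemma \ref{lm:existence}, whose Gronwall-type estimate \eqref{eqn:A-B} is precisely sharp enough to yield a $\sigma$-linear bound. The only point requiring a small amount of care is making sure the constant $C_2 = \sqrt{1+\tau(\theta_1)^2}$ stays bounded uniformly over $\lambda \in \Lambda_\sigma$; but this is automatic since $\tau(\theta_1)$ depends only on $\sigma$ and (by Remark \ref{rm:quant}) can be taken $\leq 1$ for $\sigma$ small, giving $C_2 \leq \sqrt{2}$ and thus $c \leq 2(\sqrt{2}+1)$. One should also note that the comparison argument uses $A(0)=B(0)$ exactly — choosing the comparison solution $B$ to start at the \emph{same} point $iy$ rather than at an optimized $\hat y_0$ — which is legitimate here precisely because we are measuring the drift away from the $\lambda\equiv 0$ flow rather than trying to fit $A$ into a fixed cone. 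Finally, the case analysis in the theorem only requires the one-parameter family starting from $iy \in i\R^+$, so no extension to $z \in -\cC_\theta$ is needed; the stated proposition is about the single endpoint $\gamma^{(\lambda)}(1)$, which lives on the ``upper'' branch by construction.
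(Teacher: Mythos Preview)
Your proposal is correct and is exactly the argument the paper intends: the paper's entire proof is the one-line remark that the proposition ``follows from the proof of Lemma~\ref{lm:existence}; see Remark~\ref{r:cone1},'' and you have simply made explicit the comparison \eqref{eqn:A-B} with $A(0)=B(0)=iy$ and then passed to the limit $y\to 0^+$. One minor slip: since you start from $z=iy\in\cC_0$, the relevant cone constant is $\tau(0)$ (not $\tau(\theta_1)$), so $C_2=\sqrt{1+\tau(0)^2}\le\sqrt{2}$ by Remark~\ref{rm:quant}(ii); your conclusion is unaffected.
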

\noindent This proposition follows from the proof of Lemma \ref{lm:existence}; see Remark \ref{r:cone1}.
\begin{prop}[Theorem 3.4 in \cite{Wong}] Suppose $\lbda,\tilde\lbda:[0,1]\to \C$ both have small $1/2$-H{\"o}lder norm. Then 
$$||\gamma^{(\lbda)}-\gamma^{(\tilde\lbda)}||_{\infty,[-1,1]}\leq c||\lbda-\tilde\lbda||_{\infty,[0,1]},$$
where $c>0$ is a constant.
\end{prop}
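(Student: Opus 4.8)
The plan is to establish the Lipschitz dependence $\|\gamma^{(\lbda)}-\gamma^{(\tilde\lbda)}\|_{\infty,[-1,1]}\leq c\|\lbda-\tilde\lbda\|_{\infty,[0,1]}$ by comparing the two backward flows $f(u,t,z)$ and $\tilde f(u,t,z)$ built from $\lbda$ and $\tilde\lbda$ via Lemma \ref{lm:existence}, and then passing to the boundary limit $z=iy\to i0^+$ and to $u=t$. Fix $t\in[0,1]$ and write $f(u)=f(u,t,z)$, $\tilde f(u)=\tilde f(u,t,z)$ for the two solutions of \eqref{eqn:ODEf} with driving functions $\lbda$ and $\tilde\lbda$ respectively. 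The difference satisfies, after subtracting the two ODEs,
\begin{align*}
\partial_u\bigl(f(u)-\tilde f(u)\bigr) &= -2\Bigl(\frac{1}{f(u)}-\frac{1}{\tilde f(u)}\Bigr) - \partial_u\bigl(\lbda(t-u)-\tilde\lbda(t-u)\bigr)\\
&= \frac{2}{f(u)\tilde f(u)}\bigl(f(u)-\tilde f(u)\bigr) - \partial_u\bigl(\lbda(t-u)-\tilde\lbda(t-u)\bigr).
\end{align*}
Setting $P(u)=\dfrac{2}{f(u)\tilde f(u)}$ and $Q(u)=\lbda(t-u)-\tilde\lbda(t-u)$, this is exactly the hypothesis of the comparison Lemma \ref{l:Gronwall}: both $f(u)$ and $\tilde f(u)$ lie in the cone $\cC_{\tau(\th1)}$ by Lemma \ref{lm:existence}, so $|P|\leq -C\re P$ with $C=C(\th1)$ a geometric constant of the cone, while $|Q(u)-Q(0)|\leq |\lbda(t-u)-\lbda(0)|+|\tilde\lbda(t-u)-\tilde\lbda(0)|+\ldots$ — here I must be slightly careful, since $Q$ is not obviously controlled by $\|\lbda-\tilde\lbda\|_\infty$ alone. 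In fact $|Q(u)-Q(0)|\leq |\lbda(t-u)-\tilde\lbda(t-u)|+|\lbda(0)-\tilde\lbda(0)|\leq 2\|\lbda-\tilde\lbda\|_\infty=:\w$, a constant (so $\w$ non-decreasing trivially). Applying Lemma \ref{l:Gronwall} with $Z=f$ adjusted appropriately — more precisely writing the equation in the form $Z'=PZ-PQ'$ is not quite the lemma's form, so I will instead set $Z(u)=f(u)-\tilde f(u)+\bigl(\lbda(t-u)-\tilde\lbda(t-u)\bigr)$, which satisfies $Z'(u)=P(u)\bigl(f(u)-\tilde f(u)\bigr)=P(u)Z(u)-P(u)Q(u)$ with $Q(u)=\lbda(t-u)-\tilde\lbda(t-u)$ — we obtain
$$|Z(u)-Q(u)|=|f(u)-\tilde f(u)|\leq |Z(0)-Q(0)|+(C+1)\w = |f(0)-\tilde f(0)|+(C+1)\cdot 2\|\lbda-\tilde\lbda\|_\infty.$$
Since $f(0,t,z)=\tilde f(0,t,z)=z$, the initial term vanishes, giving $|f(u,t,z)-\tilde f(u,t,z)|\leq 2(C+1)\|\lbda-\tilde\lbda\|_\infty$ for all $u\in[0,t]$, all $t$, and all $z\in\cC_{\th1}$, with $C=C(\sigma)$ independent of everything else.

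Now I let $z=iy$ and send $y\to 0^+$. By Section \ref{s:gamma} the limits $f(u,t,0^+)=\lim_{y\to0^+}f(u,t,iy)$ and $\tilde f(u,t,0^+)$ exist (uniformly in $u,t$), so the uniform bound survives the limit: $|f(u,t,0^+)-\tilde f(u,t,0^+)|\leq 2(C+1)\|\lbda-\tilde\lbda\|_\infty$. Taking $u=t$ and recalling $\gamma^{(\lbda)}(t)=f(t,t,0^+)+\lbda(0)$, we get for $t\in[0,1]$
$$|\gamma^{(\lbda)}(t)-\gamma^{(\tilde\lbda)}(t)|\leq |f(t,t,0^+)-\tilde f(t,t,0^+)|+|\lbda(0)-\tilde\lbda(0)|\leq \bigl(2(C+1)+1\bigr)\|\lbda-\tilde\lbda\|_\infty,$$
which is the desired estimate on $[0,1]$ with $c=2C+3$. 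The extension to $t\in[-1,0]$ is identical: by the last paragraph of the proof of Theorem \ref{t:L_t curve}, $\gamma^{(\lbda)}$ on $[-1,0]$ is defined by the same ODE run with initial condition $z=-iy\to -i0^+$, i.e. $z\in -\cC_{\th1}$, and Lemma \ref{lm:existence} (in its mirrored form) gives the same cone confinement, so the identical Gronwall argument applies verbatim. Combining the two ranges yields the claim on all of $[-1,1]$.

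The main obstacle is a bookkeeping one rather than a conceptual one: making sure the geometric constant $C$ from the cone $\cC_{\tau(\th1)}$ genuinely depends only on $\sigma$ — which it does, since $\th1=\th1(\sigma)$ and $\tau(\th1)=\tau(\th1(\sigma))$ are fixed once $\sigma$ is fixed — and massaging the difference equation into precisely the form required by Lemma \ref{l:Gronwall} (the subtlety being whether to absorb the driving-term difference into $Z$ or to treat it as the forcing $Q$; the choice $Z=f-\tilde f+(\lbda(t-\cdot)-\tilde\lbda(t-\cdot))$ above does the job cleanly). One should also double-check that both flows stay in the \emph{same} cone so that a single constant $C$ works for $P=2/(f\tilde f)$; this follows because Lemma \ref{lm:existence} confines any solution with initial data in $\cC_{\th1}$ to $\cC_{\tau(\th1)}$, and a fortiori $|f\tilde f|$ and $\re(f\tilde f)$ are comparable with a constant depending only on $\tau(\th1)$. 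Everything else is routine.
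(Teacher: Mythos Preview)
Your proposal is correct and is precisely the argument the paper has in mind: the paper does not spell out a proof but points to Wong's Theorem~3.4, and Lemma~\ref{l:Gronwall} was introduced exactly for this purpose (and for Lemma~\ref{lm:existence}); your substitution $Z=f-\tilde f+(\lambda(t-\cdot)-\tilde\lambda(t-\cdot))$, $Q=\lambda(t-\cdot)-\tilde\lambda(t-\cdot)$, $P=2/(f\tilde f)$ is the intended one. One small sharpening: for $|P|\le -C\,\re P$ you need the product $f\tilde f$ to have argument bounded away from $\pm\pi/2$, which requires the cone angle to be strictly less than $1$; since you only evaluate at $z=\pm iy\in\cC_0$, invoke Remark~\ref{rm:quant}(ii) (so $f,\tilde f\in\cC_{\tau(0)}$ with $\tau(0)<1$ for small $\sigma$) rather than the general $\cC_{\tau(\theta_1)}$, which could have opening too wide.
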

As stated in \cite[pg. 1483]{Wong}, the above two propositions are the key tools of that paper.
\begin{prop}[Derivative's formula, Corollary 4.3 in \cite{Wong}] Suppose $\lbda:[0,1]\to \C$ is in $C^{1/2+\alpha}$ with $\alpha>0$, then $\gamma$ is differentiable on $(-1,1)\backslash \{0\}$ and
$$\gamma'(t)=\frac{i\sqrt{|t|}}{t}\exp\left\{\int^{|t|}_0 \left[\frac{1}{2u} + \frac{2}{(g_{|t|-u}(\gamma(t))-\lbda(|t|-u))^2}\right]du\right\},$$
for $t\neq 0$.

\end{prop}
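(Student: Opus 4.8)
The plan is to reduce the statement to a tip–asymptotics analysis of the conformal maps $z\mapsto f(u,t,z)$ near the boundary point $0$, and to extract the difference quotients of $\gamma$ from the concatenation identity (\ref{eqn:con1}). I keep the standing assumption $\lbda\in\Lambda_\sigma$ (so that $\gamma$ exists), and I treat $t>0$; the case $t<0$ is identical using the $-\cC_\theta$ version of the construction from Section \ref{s:hull}, the sign in the prefactor $i\sqrt{|t|}/t$ coming from the orientation of $\gamma$ on $[-1,0]$. First I rewrite the right-hand side. For $u\in(0,t)$ one has $\gamma(t)\notin L_{t-u}=\gamma([-(t-u),t-u])$ by injectivity, so $w(u):=g_{t-u}(\gamma(t))$ is well defined, satisfies $\partial_u w=-2/(w-\lbda(t-u))$ with $w(0^+)=\lbda(t)$ and $w(t)=\gamma(t)$, and stays for $u>0$ in a fixed translated cone at distance $\gtrsim\sqrt u$ from $\lbda(t-u)$. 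The curve $u\mapsto f(u,t,0^+)+\lbda(t-u)$ solves the same ODE with the same data and enjoys the same confinement; the Gronwall-type Lemma \ref{l:Gronwall} applied to their difference forces them to coincide, so
$$g_{t-u}(\gamma(t))-\lbda(t-u)=f(u,t,0^+),\qquad 0<u\le t .$$
Hence the integral in the statement equals $I_t:=\int_0^t\big[\tfrac{1}{2u}+\tfrac{2}{f(u,t,0^+)^2}\big]\,du$, and the claimed formula becomes $\gamma'(t)=\tfrac{i}{\sqrt t}\,e^{I_t}$. Comparing $f(\cdot,t,0^+)$ with the constant-driving solution $u\mapsto 2i\sqrt u$ via Lemma \ref{l:Gronwall} with $\w(u)=\|\lbda\|_{1/2+\alpha}u^{1/2+\alpha}$ gives $f(u,t,0^+)=2i\sqrt u+O(u^{1/2+\alpha})$, so the integrand of $I_t$ is $O(u^{\alpha-1})$ near $0$ and $I_t$ converges.

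Next I produce the difference quotient. Applying (\ref{eqn:con1}) with $(t_1,t_2)=(t,t+s)$ and $u=t$, then letting the initial value $z\to0^+$,
$$\gamma(t+s)-\gamma(t)=f(t,t,w_s)-f(t,t,0^+),\qquad w_s:=f(s,t+s,0^+),$$
and with $(t_1,t_2)=(t-s,t)$, $u=t-s$,
$$\gamma(t)-\gamma(t-s)=f(t-s,t-s,w_s')-f(t-s,t-s,0^+),\qquad w_s':=f(s,t,0^+).$$
The comparison above gives $w_s,w_s'=2i\sqrt s+O(s^{1/2+\alpha})$, hence $w_s^2,(w_s')^2=-4s+O(s^{1+\alpha})$. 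Everything therefore comes down to the tip asymptotics of the conformal map $z\mapsto f(\tau,\tau,z)$ on $\cC_{\th1}$, namely the claim
$$f(\tau,\tau,z)-f(\tau,\tau,0^+)=-\tfrac{i}{4\sqrt\tau}\,e^{I_\tau}\,z^2\,(1+o(1))\qquad(z\to0,\ z\in\cC_\theta,\ \theta<\th1),$$
uniformly in $\theta<\th1$ fixed. Granting this, set $z=w_s'$, $\tau=t-s$ (resp. $z=w_s$, $\tau=t$); since $I_{t-s}\to I_t$ by joint continuity of $(a,b)\mapsto g_a(\gamma(b))$, both one-sided difference quotients of $\gamma$ at $t$ converge to $-\tfrac{i}{4\sqrt t}e^{I_t}\cdot(-4)=\tfrac{i}{\sqrt t}e^{I_t}$. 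This proves at once that $\gamma$ is differentiable on $(-1,1)\setminus\{0\}$ and gives the formula.

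It remains to prove the tip asymptotics, which is the heart of the matter. By (\ref{eqn:prtlz}), $\partial_z f(\tau,\tau,z)=\exp\int_0^\tau \tfrac{2}{f(v,\tau,z)^2}\,dv$, and I split the exponent against the constant-driving comparison $\sqrt{z^2-4v}$ (which solves (\ref{e:ODEf2}) with constant $\lbda$ and seed $z$):
$$\int_0^\tau\frac{2}{f(v,\tau,z)^2}\,dv=\int_0^\tau\Big[\frac{2}{f(v,\tau,z)^2}-\frac{2}{z^2-4v}\Big]\,dv+\int_0^\tau\frac{2}{z^2-4v}\,dv .$$
The second term integrates explicitly to $\log z-\tfrac12\log(z^2-4\tau)$ for the branch compatible with $f(v,\tau,iy)=i\sqrt{y^2+4v}$, and thus exponentiates to $-\tfrac{i}{2\sqrt\tau}\,z\,(1+o(1))$ as $z\to0$ in $\cC_\theta$. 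For the first term, Lemma \ref{l:Gronwall} applied to $f(v,\tau,z)-\sqrt{z^2-4v}$ as a function of $v$ — both curves staying in a fixed cone with imaginary part $\gtrsim\sqrt v$ — yields $|f(v,\tau,z)-\sqrt{z^2-4v}|\le C\|\lbda\|_{1/2+\alpha}v^{1/2+\alpha}$ uniformly for small $z\in\cC_\theta$, whence $\big|\tfrac{2}{f(v,\tau,z)^2}-\tfrac{2}{z^2-4v}\big|\le C'v^{\alpha-1}$, an integrable bound independent of $z$; by dominated convergence the first term tends to $I_\tau$ as $z\to0$. Combining, $\partial_z f(\tau,\tau,z)=-\tfrac{i}{2\sqrt\tau}e^{I_\tau}z(1+o(1))$, and integrating from $0$ to $z$ along a ray in $\cC_\theta$ gives the stated $z^2$-asymptotics.

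The main obstacle is precisely the uniform Gronwall comparison in the previous paragraph: one must control $f(v,\tau,z)-\sqrt{z^2-4v}$ simultaneously for all small $v$ and all small $z$ in a cone, up to the tip singularity $v\to0$ where both quantities degenerate like $\sqrt v$; this is where the quantitative cone estimates of Lemma \ref{lm:existence} and Remark \ref{rm:quant} (with $\th1$ large when $\sigma$ is small) are used to keep the denominators bounded below by a multiple of $\sqrt v$. A secondary genuine nuisance is fixing the branch of $\log$ and square root consistently as $z$ ranges over $\cC_\theta$, so that the elementary term really exponentiates to $-\tfrac{i}{2\sqrt\tau}z(1+o(1))$ with the correct factor $i$ — this is what produces the prefactor $i\sqrt{|t|}/t$. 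Note that no differentiability of $\lbda$ is used anywhere; only the $(1/2+\alpha)$-Hölder modulus enters, through $\w(v)=\|\lbda\|_{1/2+\alpha}v^{1/2+\alpha}$ in Lemma \ref{l:Gronwall}, which is exactly what makes the renormalized integrand $O(v^{\alpha-1})$ integrable and forces the square-root behaviour at the tip.
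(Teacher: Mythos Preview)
The paper does not actually supply a proof of this proposition: it is one of four results listed after the remark ``It turns out that they are true without much modification,'' the intended proof being the one in \cite{Wong} (and \cite{LT}) transported to the complex setting. So there is no proof in the paper to compare against; what you have written is precisely the ``modification'' the paper leaves to the reader.

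Your argument is essentially correct and is in the spirit the paper intends. The identification $g_{|t|-u}(\gamma(t))-\lbda(|t|-u)=f(u,t,0^+)$ follows, as you say, from the relation $h_{u,t}=g_{t-u}\circ g_t^{-1}$ (valid also for complex $\lbda$ by ODE uniqueness), which renders the statement an assertion about the backward flow. The use of (\ref{eqn:con1}) to convert difference quotients of $\gamma$ into increments $f(\tau,\tau,w)-f(\tau,\tau,0^+)$ with $w\sim 2i\sqrt{s}$, followed by a tip expansion of $z\mapsto f(\tau,\tau,z)$, is exactly the structure of Wong's argument; your substitution of the Gronwall comparison (Lemma \ref{l:Gronwall}) with $\w(v)=\|\lbda\|_{1/2+\alpha}v^{1/2+\alpha}$ in place of Wong's real-variable estimates is the natural adaptation and mirrors the approach of \cite{LT}. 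Two minor points worth tightening: (i) for the left difference quotient you also need that the $o(1)$ in the tip expansion is locally uniform in the time parameter $\tau$ near $t$, not just that $I_{t-s}\to I_t$; this follows from the same dominated-convergence bound since the majorant $C'v^{\alpha-1}$ is independent of both $z$ and $\tau$ in a neighbourhood; (ii) the branch bookkeeping you flag is genuinely needed, and is fixed once and for all by the normalisation $f(v,\tau,iy)=i\sqrt{y^2+4v}$ on $\cC_0$, which forces $\sqrt{z^2-4\tau}\to 2i\sqrt{\tau}$ as $z\to 0$ in $\cC_\theta$.
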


\begin{prop}[Regularity result] If $\lbda\in C^{\alpha+1/2}([0,1])$ with $\alpha>1/2$, then $\gamma\in C^{\alpha+1/2}((-1,1)\backslash \{0\})$ quantitatively as in \cite{Wong} and \cite[Theorem 1.1]{LT}.
\end{prop}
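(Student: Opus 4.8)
The plan is to read the regularity of $\gamma$ off the derivative formula of the previous proposition, fed by the quantitative control of the backward flow from Section~\ref{s:proof}; this is the route of \cite{Wong} and \cite[Theorem~1.1]{LT}, the only change being that the real‑valued a priori estimates used there (and in \cite{RTZ}) are imported to the complex setting through the comparison Lemma~\ref{l:Gronwall}. Set $\beta=\alpha-\tfrac12$ and treat first the range $\tfrac12<\alpha<\tfrac32$, so $\beta\in(0,1)$ and $C^{\alpha+1/2}=C^{1+\beta}$; larger $\alpha$ follows by differentiating the formula below once more and iterating, exactly as in those references. Since $t\mapsto i\sqrt{|t|}/t$ is real‑analytic and bounded below in modulus on each interval $[t_0,1]$ with $t_0>0$, and $\gamma'(t)=\frac{i\sqrt{|t|}}{t}\exp\{I(t)\}$ with
\[
I(t)=\int_0^{|t|}\Big[\frac{1}{2u}+\frac{2}{\big(g_{|t|-u}(\gamma(t))-\lbda(|t|-u)\big)^{2}}\Big]du,
\]
it suffices to show that $t\mapsto I(t)$ is $\beta$‑H\"older on each $[t_0,1]$ (the interval $[-1,-t_0]$ being identical by the symmetry used to build $\gamma$ on $[-1,0]$). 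The endpoint $t=0$ is genuinely excluded, since $\gamma(t)=2i\sqrt{t}+\lbda(0)+o(\sqrt t)$ and so $\gamma$ is not even $C^1$ at $0$.

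The workhorse is the process $\phi(u,t):=g_{|t|-u}(\gamma(t))-\lbda(|t|-u)$, the tip viewed from the driving point along the backward flow. Chaining the concatenation identity~(\ref{eqn:con1}) with $h_{t,t}=g_t^{-1}$ and its complex extension from Section~\ref{s:lbda_real} shows that $\phi(u,t)=f(u,|t|,0^+)$, so $\phi(\cdot,t)$ is the solution of~(\ref{e:ODEf2}) from the degenerate value $0^+$ analysed in Sections~\ref{sec:existence}--\ref{s:gamma}; in particular $\phi(u,t)$ stays in a narrow cone about $i\R^+$ with $\im\phi(u,t)\ge\nu(\th1)\sqrt u>0$ for $u>0$, so the integrand of $I$ is singular only at $u=0$. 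Comparing $\phi(\cdot,t)$ with the constant‑driving solution $2i\sqrt u$ via Lemma~\ref{l:Gronwall}, as in the proof of Lemma~\ref{lm:existence} but with $\w(v)=\|\lbda'\|_{\infty}v$ rather than $\sigma\sqrt v$ (this is where the hypothesis $\lbda\in C^{1}$ enters), gives $\phi(u,t)=2i\sqrt u\,(1+\psi(u,t))$ with $|\psi(u,t)|\le Cu^{1/2}$; consequently $\frac{1}{2u}+\frac{2}{\phi(u,t)^2}=\frac{\phi(u,t)^2+4u}{2u\,\phi(u,t)^2}=O(u^{-1/2})$, which re‑confirms that $I(t)$ is finite. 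Running the same comparison for $\phi(\cdot,t)-\phi(\cdot,t')$, $t_0\le t'<t$ — its forcing term $[\lbda(t-u)-\lbda(t)]-[\lbda(t'-u)-\lbda(t')]$ vanishes at $u=0$ and has modulus $\le u\,[\lbda']_{C^{\beta}}|t-t'|^{\beta}$ because $\lbda'\in C^{\beta}$ — gives
\[
|\phi(u,t)-\phi(u,t')|\le C\,u\,|t-t'|^{\beta},\qquad\text{so}\qquad |\psi(u,t)-\psi(u,t')|\le C\,u^{1/2}\,|t-t'|^{\beta};
\]
these are first obtained at positive initial height $iy$, with constants uniform in $y$, and then passed to the limit $y\to0^+$ as in Section~\ref{s:gamma}.

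It remains to bound $I(t)-I(t')$ for $t_0\le t'<t$. The $1/(2u)$ terms cancel on $(0,t')$, so
\[
I(t)-I(t')=\int_0^{t'}2\Big(\frac{1}{\phi(u,t)^2}-\frac{1}{\phi(u,t')^2}\Big)\,du+\int_{t'}^{t}\Big[\frac{1}{2u}+\frac{2}{\phi(u,t)^2}\Big]\,du.
\]
On $[t',t]\subset[t_0,1]$ the second integrand is bounded, so that piece is $O(t-t')\le C|t-t'|^{\beta}$. In the first piece, $\frac{1}{\phi(u,t)^2}-\frac{1}{\phi(u,t')^2}=\frac{\phi(u,t')^2-\phi(u,t)^2}{\phi(u,t)^2\,\phi(u,t')^2}$ has numerator $\lesssim u^{3/2}|t-t'|^{\beta}$ (using $|\phi(u,t)-\phi(u,t')|\lesssim u|t-t'|^{\beta}$ and $|\phi(u,\cdot)|\lesssim\sqrt u$) and denominator $\asymp u^{2}$, so the first piece is $\lesssim|t-t'|^{\beta}\int_0^{t'}u^{-1/2}\,du\lesssim|t-t'|^{\beta}$. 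Altogether $|I(t)-I(t')|\le C|t-t'|^{\beta}$ with $C$ depending quantitatively on $\|\lbda\|_{C^{\alpha+1/2}}$, $\sigma$ and $t_0$; hence $\gamma'\in C^{\beta}$ on each $[t_0,1]$ and $[-1,-t_0]$, i.e.\ $\gamma\in C^{\alpha+1/2}$ on $(-1,1)\setminus\{0\}$.

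I expect the main obstacle to be the two backward‑flow estimates on which the singular‑integral bookkeeping rests: controlling $\phi(\cdot,t)$, and especially its $t$‑dependence, uniformly down to $u=0$ in spite of the degenerate initial value $0^+$ (which forces one to work at positive height and then pass to a limit with constants that do not blow up), and doing so with the precise powers of $u$ that leave no logarithmic loss in the final H\"older exponent. For real $\lbda$ this is the technical core of \cite{Wong} and \cite[Theorem~1.1]{LT}; the only genuinely new point here is that their real‑valued a priori bounds must be routed through Lemma~\ref{l:Gronwall}, which — as throughout this paper — costs no new idea.
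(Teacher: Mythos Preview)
The paper does not actually prove this proposition: it is stated in a remark at the end of Section~\ref{s:hull} together with three companion statements, prefaced only by the sentence ``It turns out that they are true without much modification'' and a pointer to \cite{Wong} and \cite[Theorem~1.1]{LT}. Your proposal is therefore not competing with a proof in the paper but supplying the details the paper deliberately omits, and it does so along precisely the route the paper indicates: feed the derivative formula of the preceding proposition with backward--flow estimates obtained from Lemma~\ref{l:Gronwall}, exactly as in \cite{Wong,LT} but with the real a~priori bounds replaced by the cone estimates of Section~\ref{s:proof}. The identification $\phi(u,t)=f(u,|t|,0^+)$, the comparison with $2i\sqrt u$ using $\omega(v)=\|\lbda'\|_\infty v$, the $t$--difference estimate via the forcing $[\lbda(t-u)-\lbda(t)]-[\lbda(t'-u)-\lbda(t')]$ (which, after the substitution $\tilde Z=Z+Q$ used implicitly in the proof of Lemma~\ref{lm:existence}, fits the hypotheses of Lemma~\ref{l:Gronwall} with $\omega(v)\le v[\lbda']_{C^\beta}|t-t'|^\beta$), and the final split of $I(t)-I(t')$ are all correct and track the arguments in \cite{Wong,LT} faithfully. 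In short: your sketch is correct and is essentially the argument the paper defers to those references.
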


%



\begin{thebibliography}{10}

\bibitem[CL55]{CL}
Earl~A. Coddington and Norman Levinson.
\newblock {\em Theory of ordinary differential equations}.
\newblock McGraw-Hill Book Company, Inc., New York-Toronto-London, 1955.


\bibitem[JVL11]{JVL11}
Fredrik Johansson Viklund, and Gregory ~F. Lawler.
\newblock Optimal {H}\"older exponent for the {SLE} path.
\newblock{\em Duke Math. J.}, 159(3):351--383, 2011.

\bibitem[KNK04]{KKN}
Wouter Kager, Bernard Nienhuis, and Leo~P. Kadanoff.
\newblock Exact solutions for {L}oewner evolutions.
\newblock {\em J. Statist. Phys.}, 115(3-4):805--822, 2004.

\bibitem[Law05]{L}
Gregory~F. Lawler.
\newblock {\em Conformally invariant processes in the plane}, volume 114 of
  {\em Mathematical Surveys and Monographs}.
\newblock American Mathematical Society, Providence, RI, 2005.

\bibitem[Lin05]{Lind}
Joan~R. Lind.
\newblock A sharp condition for the {L}oewner equation to generate slits.
\newblock {\em Ann. Acad. Sci. Fenn. Math.}, 30(1):143--158, 2005.

\bibitem[LT16]{LT}
Joan~R.~Lind, Huy Tran.
\newblock Regularity of Loewner curves.
\newblock {\em Indiana Univ. Math. J.}, 65(5):1675--1712, 2016

\bibitem[MnSS83]{MSS}
R.~Ma\~n\'e, P.~Sad, and D.~Sullivan.
\newblock On the dynamics of rational maps.
\newblock {\em Ann. Sci. \'Ecole Norm. Sup. (4)}, 16(2):193--217, 1983.

\bibitem[MR05]{MR05}
Donald~E. Marshall and Steffen Rohde.
\newblock The {L}oewner differential equation and slit mappings.
\newblock {\em J. Amer. Math. Soc.}, 18(4):763--778 (electronic), 2005.

\bibitem[RS]{RS13}
Steffen Rohde and Oded Schramm.
\newblock Unpublished.

\bibitem[RS05]{RS}
Steffen Rohde and Oded Schramm.
\newblock Basic properties of {SLE}.
\newblock {\em Ann. of Math.}, 161(2):883--924, 2005.

\bibitem[RTZ17]{RTZ}
Steffen Rohde, Huy Tran, and Michel Zinsmeister.
\newblock The {L}oewner equation and {L}ipschitz graphs.
\newblock {\em Rev. Mat. Iberoam.}, ??, 2017.

\bibitem[Slo91]{Slod}
Zbigniew Slodkowski.
\newblock Holomorphic motions and polynomial hulls.
\newblock {\em Proc. Amer. Math. Soc.}, 111(2):347--355, 1991.



\bibitem[W14]{Wong}
Carto Wong.
\newblock Smoothness of {L}oewner slits.
\newblock {\em Trans. Amer. Math. Soc.}, 366(3):1475--1496, 2014.

\end{thebibliography}
\end{document}